 \let\mathscr\mathcal
\newtheorem{theorem}{Theorem}[section]
\newtheorem{lemma}[theorem]{Lemma}
\newtheorem{prop}[theorem]{Proposition}
\newtheorem{cor}[theorem]{Corollary}
\theoremstyle{definition}
\theoremstyle{remark}
\newtheorem{remark}[theorem]{Remark}
\numberwithin{equation}{section}
\begin{document}

\title[Automorphic $L$-functions]{On the density function
for the value-distribution of automorphic $L$-functions}

\author{Kohji Matsumoto}
\address{K. Matsumoto: Graduate School of Mathematics, Nagoya University, Chikusa-ku, Nagoya 464-8602, Japan}
\email{kohjimat@math.nagoya-u.ac.jp}

\author{Yumiko Umegaki}
\address{{Y.\,Umegaki:} Department of Mathematical and Physical Sciences, Nara \
Women's University, Kitauoya Nishimachi, Nara 630-8506, Japan}
\email{ichihara@cc.nara-wu.ac.jp}

\keywords{automorphic $L$-function, value-distribution, density function}
\subjclass[2010]{Primary 11F66, Secondary 11M41}
\thanks{
Research of the first author is
supported by Grants-in-Aid for Science Research (B) 25287002, and that of the
second author is by Grant-in-Aid for Young Scientists (B) 23740020, JSPS}

\begin{abstract}
The Bohr-Jessen limit theorem is a probabilistic limit theorem on the
value-distribution of the Riemann zeta-function in the critical strip.
Moreover their limit measure can be written as an integral involving a certain
density function.    The existence of the limit measure is now known for a quite
general class of zeta-functions, but the integral expression has been proved only for some special cases (such as Dedekind zeta-functions).
In this paper we give an alternative proof of the existence of the limit measure for
a general setting, and then prove the integral expression, with an explicitly
constructed density function, for the case of automorphic $L$-functions attached
to primitive forms with respect to congruence subgroups $\Gamma_0(N)$.
\end{abstract}

\maketitle


\section{Introduction}\label{sec1}

Let $s=\sigma+it$ be a complex variable, $\zeta(s)$ the Riemann zeta-function.
Let $R$ be a fixed rectangle in the complex plane $\mathbb{C}$, with the edges parallel to
the axes.   By $\mu_k$ we mean the $k$-dimensional usual Lebesgue measure.
For $\sigma>1/2$ and $T>0$, define
\begin{align}\label{1-1}
V_{\sigma}(T,R;\zeta)=\mu_1\{t\in[-T,T]\;|\;\log\zeta(\sigma+it)\in R\}.
\end{align}
(The rigorous definition of $\log\zeta(\sigma+it)$ will be given later, in Section \ref{sec3}.)
In their classical paper \cite{BJ3032}, Bohr and Jessen proved the existence of the limit
\begin{align}\label{1-2}
W_{\sigma}(R;\zeta)=\lim_{T\to\infty}\frac{1}{2T}V_{\sigma}(T,R;\zeta).
\end{align}
This is now called the Bohr-Jessen limit theorem.
Moreover they proved that this limit value can be written as
\begin{align}\label{1-3}
W_{\sigma}(R;\zeta)=\int_R \mathcal{M}_{\sigma}(z,\zeta)|dz|,
\end{align}
where $z=x+iy\in\mathbb{C}$, $|dz|=dxdy/2\pi$, and $\mathcal{M}_{\sigma}(z,\zeta)$ is a
continuous non-negative, explicitly constructed function defined on $\mathbb{C}$, which we may call
the density function for the value-distribution of $\zeta(s)$.

This work is a milestone in the value-distribution theory of $\zeta(s)$, and various
alternative proofs and related results have been pubished; for example, 
Jessen and Wintner \cite{JW35}, Borchsenius and Jessen \cite{BJ48}, Guo \cite{Guo96}, and
Ihara and the first author \cite{IM11}.

An important problem is to consider the generalization of the Bohr-Jessen theorem.
The first author \cite{Mat90} proved that the formula \eqref{1-2} can be generalized to a fairly
general class of zeta-functions with Euler products.
However, \eqref{1-3} has not yet been generalized to such a general class.
The reason is as follows.

The original proof of \eqref{1-2} and \eqref{1-3} by Bohr and Jessen depends on a geometric
theory of certain "infinite sums" of convex curves, developed by themselves \cite{BJ29}.
In later articles \cite{JW35} and \cite{BJ48}, the effect of the convexity of curves was
embodied in a certain inequality due to Jessen and Wintner \cite[Theorem 13]{JW35}.
Using this method, the Bohr-Jessen theory was generalized to Dirichlet
$L$-functions (Joyner \cite{Joy86}) and Dedekind zeta-functions of Galois number fields
(the first author \cite{Mat92}).    These generalizations are possible because these
zeta-functions have "convex" Euler products in the sense of \cite[Section 5]{Mat90}.
But this convexity cannot be expected for more general zeta-functions.

In \cite{Mat90}, the first author developed a method of proving \eqref{1-2} without using any
convexity, so succeeded in generalizing the theory.    However, the method in \cite{Mat90} 
cannot give a generalization of \eqref{1-3}.

So far, there is no proof of \eqref{1-3} or its analogues without using the convexity, or
the Jessen-Wintner type of inequalities.    For example, \cite{IM11} gives a different argument of
constructing the density functions for Dirichlet $L$-functions, but the argument in
\cite{IM11} also depends on the Jessen-Wintner inequality.

In \cite{Mat06} \cite{Mat07}, the first author obtained certain quantitative results on the
value-distribution of Dedekind zeta-functions of non-Galois fields and Hecke $L$-functions of
ideal class characters, whose Euler products are not convex.    But in these cases, they are
"not so far" from the case of Dedekind zeta-functions of Galois fields.    In fact, a simple generalization of the
Jessen-Wintner inequality is proved (\cite[Lemma 2]{Mat07}) and is essentially used in
the proof. 

Actually, analyzing the proof of \cite[Theorems 12, 13]{JW35} carefully, we can see that
the convexity of curves is not essential.    The indispensable tool is the inequality of
the Jessen-Wintner type.    (However the convex property is probably of independent
interest; see Section \ref{sec8}.)

It is the purpose of the present paper to obtain an analogue of \eqref{1-3} in the case of
automorphic $L$-functions.    The main result (Theorem \ref{main-1}) will be stated in
the next section.   
The key is Proposition \ref{prop-curve}, which is an
analogue of the Jessen-Wintner inequality for the automorphic case.   The novelty of this
proposition will be discussed in Section \ref{sec6}.

Except for the proof of this inequality, the argument can be carried out in more
general situation.    In Section \ref{sec3} we will introduce a general class of
zeta-functions, and in Sections \ref{sec4} to \ref{sec6} we will 
generalize the method in \cite{Mat92} to that general class.
Then in Section \ref{sec7} we will prove the Jessen-Wintner inequality for the 
automorphic case to complete the proof of the main theorem.

\section{Statement of the main result}\label{sec2}

Let $f$ be a primitive form of weight $\kappa$ and level $N$, 
that is a normalized Hecke-eigen new form of weight $\kappa$ with respect to the congruence
subgroup $\Gamma_0(N)$, and write its Fourier
expansion as
$$
f(z)=\sum_{n=1}^{\infty}\lambda_f(n)n^{(\kappa-1)/2}e^{2\pi inz},
$$
where the coefficients $\lambda_f(n)$ are real numbers with $\lambda_f(1)=1$.
Denote the associated $L$-function by
$$
L_f(s)=\sum_{n=1}^{\infty}\lambda_f(n)n^{-s}.
$$
This is absolutely convergent when $\sigma>1$, and can be continued to the whole plane
$\mathbb{C}$ as an entire function.   We understand the rigorous meaning of $\log L_f(s)$
and of
$$
V_{\sigma}(T,R;L_f)=\mu_1\{t\in[-T,T]\;|\;\log L_f(\sigma+it)\in R\}
$$
in the sense explained in Section \ref{sec3}.
The following is the main theorem of the present paper.

\begin{theorem}\label{main-1}
For any $\sigma>1/2$, the limit
\begin{align}\label{2-1}
W_{\sigma}(R;L_f)=\lim_{T\to\infty}\frac{1}{2T}V_{\sigma}(T,R;L_f)
\end{align}
exists, and can be written as
\begin{align}\label{2-2}
W_{\sigma}(R;L_f)=\int_R \mathcal{M}_{\sigma}(z,L_f)|dz|,
\end{align}
where $\mathcal{M}_{\sigma}(z,L_f)$ is a continuous non-negative function 
(explicitly given by \eqref{6-4} below) defined on $\mathbb{C}$.
\end{theorem}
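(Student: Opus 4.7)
I would follow the general framework to be developed in Sections \ref{sec3}--\ref{sec6}, then verify its hypotheses for $L_f$ with the automorphic Jessen-Wintner inequality (Proposition \ref{prop-curve}) supplying the key input.

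First, I would record the Euler product
\[
L_f(s)=\prod_{p}(1-\alpha_{p} p^{-s})^{-1}(1-\beta_{p} p^{-s})^{-1},
\]
where $|\alpha_{p}|=|\beta_{p}|=1$ for $p\nmid N$ by Deligne's bound and $|\alpha_{p}|,|\beta_{p}|\le 1$ in general. Writing formally $\log L_f(s) = -\sum_p\sum_{j=1}^{2}\log(1-\gamma_{j,p}p^{-s})$ places $L_f$ in the general class introduced in Section \ref{sec3}. Truncating at primes $p\le M$ gives $\log L_{f,M}(s)$.

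For the existence of the limit \eqref{2-1}, I would use the random model: let $\{X_{p}\}_{p}$ be independent random variables uniformly distributed on the unit circle, and set
\[
S_M(\sigma)=-\sum_{p\le M}\sum_{j=1}^{2}\log(1-\gamma_{j,p}X_{p}p^{-\sigma}).
\]
Kronecker-Weyl equidistribution, applied to the $\mathbb{Q}$-linearly independent family $\{(\log p)/2\pi\}$, shows that the time-mean on $[-T,T]$ of a continuous bounded function of $\log L_{f,M}(\sigma+it)$ converges, as $T\to\infty$, to the expectation against the law of $S_M(\sigma)$. Combined with a mean-square estimate for the tail $\log L_f-\log L_{f,M}$ (standard for $\sigma>1/2$ via Rankin-Selberg bounds on $\sum \lambda_f(n)^{2}$), this yields weak convergence of $S_M(\sigma)$ as $M\to\infty$ to a probability measure $P_\sigma$ on $\mathbb{C}$ and the identity $W_\sigma(R;L_f)=P_\sigma(R)$ for every continuity rectangle $R$. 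This recovers the analogue of \eqref{1-2}.

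For the integral representation \eqref{2-2}, the decisive input is Proposition \ref{prop-curve}: it gives a bound, uniform in $M$, on the density of the law of $S_M(\sigma)$ after discarding only finitely many summands. A Jessen-Wintner style convolution argument (as adapted in Sections \ref{sec4}--\ref{sec6} from \cite{Mat92}) then promotes $P_\sigma$ to a measure with a continuous non-negative density $\mathcal{M}_\sigma(z,L_f)$, which is precisely the function \eqref{6-4}.

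\emph{Main obstacle.} The essential difficulty is the last step. For a degree-one Euler product (Dirichlet or Dedekind case of a Galois field) each $p$ contributes a convex curve on the unit circle, and the classical Jessen-Wintner inequality applies. For $L_f$ each prime contributes the image curve $x\mapsto -\log(1-\alpha_{p}xp^{-\sigma})-\log(1-\beta_{p}xp^{-\sigma})$, which is in general \emph{not} convex when $\alpha_{p}\ne\beta_{p}$. Consequently the classical inequality is not available, and the whole argument hinges on producing a convexity-free substitute, namely Proposition \ref{prop-curve}. Proving that proposition is the new ingredient on which the theorem ultimately rests.
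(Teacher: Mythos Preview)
Your plan matches the paper's approach: the limit \eqref{2-1} comes from the finite truncation via Kronecker--Weyl, a mean-square tail estimate, and L\'evy's convergence theorem, while \eqref{2-2} follows once Proposition~\ref{prop-curve} supplies the Jessen--Wintner-type decay. Two points deserve sharpening.

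\emph{How Proposition~\ref{prop-curve} is actually used.} The proposition does not bound ``the density of the law of $S_M(\sigma)$'' directly; it bounds the one-prime characteristic function $K_n(w)=\int_0^1 e^{i\langle z_n(\theta),w\rangle}d\theta$ by $O_n(|w|^{-1/2})$ for each sufficiently large $p_n\in\mathbb{P}_f(\varepsilon)$. Because $\Lambda_M(w)=\prod_{n\le M}K_n(w)$ and $|K_n|\le 1$ trivially, five such primes already give $\Lambda_M(w)=O(|w|^{-5/2})$ uniformly in $M$ once $M$ exceeds them (this is Proposition~\ref{at-least-5-n}); Fourier inversion then yields the continuous density \eqref{6-4}. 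You should make explicit that the existence of infinitely many such primes is guaranteed by the Murty/Murty--Murty result \eqref{2-6}, i.e.\ that $|\lambda_f(p)|>\sqrt{2}-\varepsilon$ on a set of positive density---this is the arithmetic input that makes the analytic machinery applicable.

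\emph{Framing of the obstacle.} Your diagnosis that the curves $\theta\mapsto -\log(1-\alpha_p e^{i\theta}p^{-\sigma})-\log(1-\beta_p e^{i\theta}p^{-\sigma})$ are ``in general not convex'' is not quite the paper's point of view; indeed Section~\ref{sec8} shows they \emph{are} convex for large $p\in\mathbb{P}_f(\varepsilon)$. The genuine difficulty (see Section~\ref{sec6}) is that the family of curves is indexed by the infinitely many values of $\alpha_f(p)$, so the classical Jessen--Wintner theorem, even when it applies to each curve, gives no uniformity. Proposition~\ref{prop-curve} bypasses convexity entirely and proves the $|w|^{-1/2}$ bound via the first and second derivative tests, with constants controlled by $|a_1|=|\lambda_f(p)|>\sqrt{2}-\varepsilon$.
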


The above function $\mathcal{M}_{\sigma}(w,L_f)$ can be called the density function for
the value-distribution of $L_f(s)$.    The integral expression involving the density
function is useful for quantitative studies; for example, 
in \cite{Mat92} \cite{Mat06} \cite{Mat07}
we used such expressions to evaluate the speed of convergence of \eqref{3-3} below
in the case of Dedekind zeta-functions and Hecke $L$-functions.    Therefore we may
expect that \eqref{2-2} can be used for quantitative investigation on the
value-distribution of $L_f(s)$ (see also Remark \ref{rem-quantitative}).

Let $\mathbb{P}$ be the set of all prime numbers.    Since $f$ is a common Hecke eigen 
form, $L_f(s)$ has the Euler product
\begin{align}\label{2-3}
L_f(s)&=\prod_{\substack{p\in\mathbb{P} \\ p|N}}(1-\lambda_f(p)p^{-s})^{-1}
\prod_{\substack{p\in\mathbb{P} \\ p\nmid N}}(1-\lambda_f(p)p^{-s}+p^{-2s})^{-1}\\
&=\prod_{\substack{p\in\mathbb{P} \\ p|N}}(1-\lambda_f(p)p^{-s})^{-1}
\prod_{\substack{p\in\mathbb{P} \\ p\nmid N}}(1-\alpha_f(p)p^{-s})^{-1}
(1-\beta_f(p)p^{-s})^{-1},\notag
\end{align}
where $\alpha_f(p)+\beta_f(p)=\lambda_f(p)$, $\beta_f(p)=\overline{\alpha_f(p)}$, and
\begin{align}\label{2-4}
|\alpha_f(p)|=|\beta_f(p)|=1.
\end{align}
Also we know
\begin{align}\label{2-5}
|\lambda_f(p)|\leq 1 \qquad (\mbox{if}\;\; p|N)
\end{align}
(see \cite[Theorem 4.6.17]{Miy89}).

It is known that, for any $\varepsilon>0$, there exists a set of primes 
$\mathbb{P}_f(\varepsilon)$ of positive density in $\mathbb{P}$, such that the inequality
\begin{align}\label{2-6}
|\lambda_f(p)|>\sqrt{2}-\varepsilon
\end{align}
holds for any $p\in\mathbb{P}_f(\varepsilon)$ (M. R. Murty 
\cite[Corollary 2 of Theorem 4]{Mur83} in the full
modular case, and M. R. Murty and V. K. Murty \cite[Chapter 4, Theorem 8.6]{Mur97}
for general $\Gamma_0(N)$ case).
This fact is used essentially in the course of the proof.

\section{The general formulation}\label{sec3}

A large part of the proof of our Theorem \ref{main-1} can be carried out under a more
general framework, that is, for general Euler products introduced in \cite{Mat90}.
We begin with recalling the definition of those Euler products.

Let $\mathbb{N}$ be the set of all positive integers, and $g(n)\in\mathbb{N}$, 
$f(j,n)\in\mathbb{N}$ ($1\leq j\leq g(n)$) and $a_n^{(j)}\in\mathbb{C}$.   Denote by $p_n$
the $n$-th prime number.    We assume
\begin{align}\label{3-1}
g(n)\leq C_1 p_n^{\alpha},\quad |a_n^{(j)}|\leq p_n^{\beta}
\end{align}
with constants $C_1>0$ and $\alpha,\beta\geq 0$.   Define
\begin{align}\label{3-0}
\varphi(s)=\prod_{n=1}^{\infty}A_n(p_n^{-s})^{-1},
\end{align}
where $A_n(X)$ are polynomials in $X$ given by
$$
A_n(X)=\prod_{j=1}^{g(n)}(1-a_n^{(j)}X^{f(j,n)}).
$$
Then $\varphi(s)$ is convergent absolutely in the half-plane $\sigma>\alpha+\beta+1$ by
\eqref{3-1}.    Suppose

(i) $\varphi(s)$ can be continued meromorphically to $\sigma\geq \sigma_0$, where
$\alpha+\beta+1/2\leq\sigma_0<\alpha+\beta+1$,
and all poles in this region are included in a compact subset of
$\{s\;|\;\sigma>\sigma_0\}$,

(ii) $\varphi(\sigma+it)=O((|t|+1)^C)$ for any $\sigma\geq\sigma_0$, with a constant $C>0$,

(iii) It holds that
\begin{align}\label{3-2}
\int_{-T}^T |\varphi(\sigma_0+it)|^2 dt =O(T).
\end{align}

We denote by $\mathcal{M}$ the set of all $\varphi$ satisfying the above conditions.

\begin{remark}\label{automMat}
Here we note that $L_f(s)$ defined in the preceding section belongs to $\mathcal{M}$.
In fact, the Euler product is given by \eqref{2-3}.    The condition \eqref{3-1} is
satisfied with $\alpha=\beta=0$ by \eqref{2-4}, \eqref{2-5}.
It is entire, so (i) is obvious.
Since it satisfies a functional equation, (ii) follows by using the
Phragm{\'e}n-Lindel{\"o}f convexity principle.
Lastly, (iii) follows (with any $\sigma_0>1/2$) by Potter's result \cite{Pot40}. 
\end{remark}

Now let us define $\log\varphi(s)$.   First, when $\sigma>\alpha+\beta+1$, it is defined 
by the sum
$$
\log\varphi(s)=-\sum_{n=1}^{\infty}\sum_{j=1}^{g(n)}{\rm Log}(1-a_n^{(j)}p_n^{-f(j,n)s}),
$$
where Log means the principal branch.    Next, let
$$
B(\rho)=\{\sigma+i\Im\rho\;|\; \sigma_0\leq\sigma\leq\Re\rho\}
$$
for any zero or pole $\rho$ with $\Re\rho\geq\sigma_0$.
We exclude all $B(\rho)$ from $\{s\;|\;\sigma\geq\sigma_0\}$, and denote the remaining set by
$G(\varphi)$.    Then, for any $s\in G(\varphi)$, we may define $\log\varphi(s)$ by the 
analytic continuation along the horizontal path from the right.   Define
$$
V_{\sigma}(T,R;\varphi)=\mu_1\{t\in[-T,T]\;|\;\sigma+it\in G(\varphi),
\log\varphi(\sigma+it)\in R\}.
$$
Then, as a generalization of \eqref{1-2}, the first author \cite{Mat90} proved the following
\begin{theorem}\label{limittheorem}
{\rm (\cite{Mat90})}
Let $\varphi\in\mathcal{M}$.   For any $\sigma>\sigma_0$, the limit
\begin{align}\label{3-3}
W_{\sigma}(R;\varphi)=\lim_{T\to\infty}\frac{1}{2T}V_{\sigma}(T,R;\varphi)
\end{align}
exists.
\end{theorem}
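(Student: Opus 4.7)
The plan is to follow the classical Bohr--Jessen strategy, adapted to the general Euler product setting by combining a finite-dimensional equidistribution argument with a mean-square approximation of $\log\varphi$ by its truncations, afforded by condition~(iii).

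\textbf{Step 1 (finite-dimensional model).} For each $M\in\mathbb{N}$, introduce the truncated Euler product $\varphi_M(s)=\prod_{n=1}^{M}A_n(p_n^{-s})^{-1}$ and its logarithm $\log\varphi_M(s)$ as the corresponding finite sum of principal-branch logarithms. Since $\{\log p_n\}_{n=1}^{M}$ is $\mathbb{Q}$-linearly independent, the Kronecker--Weyl equidistribution theorem yields uniform distribution of $t\mapsto (p_1^{-it},\ldots,p_M^{-it})$ on the torus $\mathbb{T}^{M}$. Hence, for every rectangle $R$ whose boundary is null for the induced measure,
$$W_{\sigma,M}(R)=\lim_{T\to\infty}\frac{1}{2T}\mu_1\{t\in[-T,T]:\log\varphi_M(\sigma+it)\in R\}$$
exists and equals the mass placed on $R$ by the pushforward of normalized Haar measure on $\mathbb{T}^M$ under the continuous map $(\theta_n)\mapsto -\sum_{n,j}\mathrm{Log}(1-a_n^{(j)}\theta_n^{f(j,n)})$.

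\textbf{Step 2 (weak limit in $M$).} Show that $(W_{\sigma,M})_M$ is Cauchy in, say, the L\'evy--Prokhorov metric on probability measures on $\mathbb{C}$, so converges weakly to some measure $W_\sigma$. Orthogonality on $\mathbb{T}^M$ reduces the second moment of $\log\varphi_M(\sigma+\cdot)$, as $T\to\infty$, to a Dirichlet series of the shape $\sum_{n,j,k}|a_n^{(j)}|^{2k}p_n^{-2kf(j,n)\sigma}/k^2$, which converges for $\sigma>\alpha+\beta+\tfrac12$. This yields both tightness of the family and, applied to differences $\log\varphi_{M'}-\log\varphi_M$, the required Cauchy property.

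\textbf{Step 3 (passing from $\varphi_M$ to $\varphi$).} The decisive step is to prove that, for $\sigma>\sigma_0$,
$$\lim_{M\to\infty}\limsup_{T\to\infty}\frac{1}{2T}\int_{\mathcal{G}_M(T)}\bigl|\log\varphi(\sigma+it)-\log\varphi_M(\sigma+it)\bigr|^{2}\,dt=0,$$
where $\mathcal{G}_M(T)$ is the subset of $[-T,T]$ on which both logarithms are defined. I would express the difference as a contour integral via Cauchy's formula with one side pushed to $\Re s=\sigma_0$, bound the shifted integrand by means of (iii), estimate the horizontal segments using the Phragm\'en--Lindel\"of bound (ii), and invoke (i) together with a standard slit-counting argument to see that the excluded set $\bigcup_{\rho}B(\rho)$ has total measure $o(T)$ on $[-T,T]$. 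Combined with Step~2, a Chebyshev and portmanteau argument then gives convergence of $(2T)^{-1}V_\sigma(T,R;\varphi)$ to $W_\sigma(R)$ at every continuity rectangle, which is enough for the stated theorem.

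\textbf{Main obstacle.} The crux is Step~3. Hypothesis~(iii) is indispensable here: polynomial growth from (ii) alone cannot force $|\log\varphi-\log\varphi_M|^{2}$ to be small on average on the line $\Re s=\sigma$. It is exactly the $L^{2}$-bound on the critical line $\Re s=\sigma_0$ that quantifies how closely $\varphi$ tracks its Euler product there, and the contour shift then propagates that control into the half-plane $\Re s>\sigma_0$. Handling the exceptional slits $B(\rho)$ carefully is the subsidiary technical difficulty, but the mean-square estimate is what makes the scheme work in the absence of any convexity hypothesis on the local factors.
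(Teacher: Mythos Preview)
Your Steps~1 and~2 are essentially what the paper does (it works with characteristic functions and L\'evy's convergence theorem rather than the Prokhorov metric, but the content is the same). The genuine gap is in Step~3.

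You plan to control $\displaystyle\frac{1}{2T}\int\bigl|\log\varphi(\sigma+it)-\log\varphi_M(\sigma+it)\bigr|^2\,dt$ by a contour shift to $\Re s=\sigma_0$ and then ``bound the shifted integrand by means of~(iii)''. But hypothesis~(iii) bounds $\int|\varphi(\sigma_0+it)|^2\,dt$, not $\int|\log\varphi(\sigma_0+it)|^2\,dt$, and there is no passage from one to the other: near a zero of $\varphi$ the quantity $|\varphi|$ is small while $|\log\varphi|$ blows up. Worse, $\log\varphi$ is not holomorphic in the strip $\sigma_0\le\Re s\le\sigma$ because of the slits $B(\rho)$, so Cauchy's formula cannot be applied to it; the class $\mathcal{M}$ comes with no zero-density input, so the slits are not a ``subsidiary technical difficulty'' but the actual obstruction to your scheme.

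The paper's remedy is to avoid the logarithmic difference in mean square altogether. It introduces
\[
f_N(s;\varphi)=\frac{\varphi(s)}{\varphi_N(s)}-1,
\]
which is holomorphic away from the finitely many poles of $\varphi$, has an explicit Dirichlet series, and whose mean square on any line $\Re s=\sigma>\sigma_0$ can be evaluated by Carlson's theorem (using~(iii) and convexity only to guarantee the $O(T)$ bound needed for Carlson) and shown to tend to $0$ as $N\to\infty$. Two local lemmas going back to Bohr then convert an $L^2$-bound for $f_N$ on a small rectangle into a pointwise bound $|f_N|<\delta/2$ on a slightly smaller one, and that in turn into $|\log\varphi-\log\varphi_N|<\delta$ there. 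This yields the exceptional-set estimate
\[
\limsup_{T\to\infty}\frac{1}{2T}\,\mu_1\bigl\{t\in[-T,T]:|\log\varphi(\sigma+it)-\log\varphi_N(\sigma+it)|\ge\delta\bigr\}\le\varepsilon
\]
for $N\ge N_0(\delta,\varepsilon)$, which together with your Steps~1--2 finishes the proof. The key idea you are missing is precisely this switch from the additive difference $\log\varphi-\log\varphi_N$ to the multiplicative quotient $\varphi/\varphi_N-1$.
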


This theorem may be regarded as a result on weak convergence of probability measures, and
Prokhorov's theorem in probability theory is used in the proof given in \cite{Mat90}.

In \cite{Mat92}, the first author presented an alternative argument of proving such a limit
theorem, again without using any convexity.    This argument is based on L{\'e}vy's convergence  theorem.     The method in \cite{Mat92} is more suitable to discuss the matter of density 
functions, so in the present paper we follow the method in \cite{Mat92}. 

In \cite{Mat92}, only the case of Dedekind zeta-functions is discussed, but,
as mentioned in \cite{Mat92b}, the idea in \cite{Mat92} can be applied to any
$\varphi\in\mathcal{M}$.    Such a generalization has, however, not yet been published, so
we will give a sketch of the argument in the following Sections \ref{sec4} and \ref{sec5}. 

\section{The method of Fourier transforms}\label{sec4}

Let $\sigma>\sigma_0$, and $N\in\mathbb{N}$.    The starting point of the argument is to 
consider the finite trancation of $\varphi(s)$, that is
\begin{align*}
\varphi_N(s)=\prod_{n\leq N}A_n(p_n^{-s})^{-1}
=\prod_{n\leq N}\prod_{j=1}^{g(n)}\left(1-r_n^{(j)}p_n^{-if(j,n)t}\right)^{-1},
\end{align*}
where $r_n^{(j)}=a_n^{(j)}p_n^{-f(j,n)\sigma}$.    Then
\begin{align}\label{4-1}
\log\varphi_N(s)=-\sum_{n\leq N}\sum_{j=1}^{g(n)}\log\left(1-r_n^{(j)}
e^{-itf(j,n)\log p_n}\right).
\end{align}
Note that 
$$
|r_n^{(j)}|\leq |a_n^{(j)}|p_n^{-f(j,n)\sigma}\leq p_n^{\beta-\sigma}
\leq p_n^{\beta-(\alpha+\beta+1/2)}\leq p_n^{-1/2}\leq 1/\sqrt{2}.
$$

Let $\mathbb{Z}$ be the set of all integers, $\mathbb{R}$ the set of all real numbers,
$\mathbb{T}^N=(\mathbb{R}/\mathbb{Z})^N$ be the $N$-dimensional unit torus, and define 
the mapping
$S_N:\mathbb{T}^N\to\mathbb{C}$, attached to \eqref{4-1}, by
\begin{align}\label{4-2}
S_N(\theta_1,\ldots,\theta_N)=-\sum_{n\leq N}\sum_{j=1}^{g(n)}\log\left(1-r_n^{(j)}
e^{2\pi if(j,n)\theta_n}\right).
\end{align}
(Though $S_N$ depends on $\sigma$ and $\varphi$, we do not write explicitly in the
notation, for brevity.    Similar abbreviation is applied to the notation of
$\lambda_N$, $\Lambda$, $K_n$ below.)
We write $z_n^{(j)}(\theta_n)=-\log(1-r_n^{(j)}e^{2\pi if(j,n)\theta_n})$ and
$z_n(\theta_n)=\sum_{j=1}^{g(n)}z_n^{(j)}(\theta_n)$.
Then 
\begin{align}\label{4-3}
S_N(\theta_1,\ldots,\theta_N)=\sum_{n\leq N}z_n(\theta_n).
\end{align}

For any Borel subset $A\subset\mathbb{C}$, we define
$W_{N,\sigma}(A;\varphi)=\mu_N(S_N^{-1}(A))$.    Then $W_{N,\sigma}$ is a probability measure on
$\mathbb{C}$.

Let $R\subset\mathbb{C}$ be any rectangle with the edges parallel to the axes.
The idea of considering the inverse image $S_N^{-1}(R)\subset\mathbb{T}^N$
goes back to Bohr's work (Bohr and Courant \cite{BC14}, Bohr \cite{Boh15}, and Bohr and 
Jessen \cite{BJ3032}).    Also let $E$ be any strip, parallel to the real or imaginary axis.
We have the following two facts, whose proofs of these two facts are exactly the same as 
the proofs of \cite[Lemma 1]{Mat92}.

{\bf Fact 1}.  {\it The sets $S_N^{-1}(R)$, $S_N^{-1}(E)$ are Jordan measurable.}

{\bf Fact 2}.  {\it For any $\varepsilon>0$, there exists a positive number 
$\eta$ such that, 
for any strip $E$ whose width is not larger than $\eta$, it holds that 
$W_{N,\sigma}(E;\varphi)<\varepsilon$.}

Now define
$$
V_{N,\sigma}(T,R;\varphi)=\mu_1\{t\in[-T,T]\;|\;\log\varphi_N(\sigma+it)\in R\}.
$$
We see that $\log\varphi_N(\sigma+it)\in R$ if and only if
$$
\left(\left\{-\frac{t}{2\pi}\log p_1\right\},\ldots,
\left\{-\frac{t}{2\pi}\log p_N\right\}\right)\in S_N^{-1}(R)
$$
(where $\{x\}$ means the fractional part of $x$).
Since $\log p_1,\ldots,\log p_N$ are linearly independent over the rational number field
$\mathbb{Q}$, in view of Fact 1, we can apply the Kronecker-Weyl theorem to obtain
\begin{prop}\label{N-limittheorem}
For any $N\in\mathbb{N}$, we have
\begin{align}\label{4-4}
W_{N,\sigma}(R;\varphi)=\lim_{T\to\infty}\frac{1}{2T}V_{N,\sigma}(T,R;\varphi).
\end{align}
\end{prop}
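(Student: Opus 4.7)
The plan is to reduce the assertion to a direct application of Weyl's equidistribution theorem on the torus $\mathbb{T}^N$. The reduction is already essentially done in the text: the condition $\log\varphi_N(\sigma+it) \in R$ has been rewritten as
\[
\Bigl(\bigl\{-\tfrac{t}{2\pi}\log p_1\bigr\},\ldots,\bigl\{-\tfrac{t}{2\pi}\log p_N\bigr\}\Bigr) \in S_N^{-1}(R),
\]
so what must be proved is that, writing $\Phi(t)$ for this vector-valued function of $t$, one has
\[
\lim_{T\to\infty}\frac{1}{2T}\mu_1\{t\in[-T,T] : \Phi(t)\in S_N^{-1}(R)\}=\mu_N(S_N^{-1}(R)).
\]

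First I would recall the Kronecker--Weyl theorem in its quantitative form: if $\lambda_1,\ldots,\lambda_N$ are real numbers linearly independent over $\mathbb{Q}$, then the continuous curve $t \mapsto (\{t\lambda_1\},\ldots,\{t\lambda_N\})$ is uniformly distributed modulo $1$ in $\mathbb{T}^N$, and consequently, for every Jordan measurable set $B\subset \mathbb{T}^N$,
\[
\lim_{T\to\infty}\frac{1}{2T}\mu_1\{t\in[-T,T] : (\{t\lambda_1\},\ldots,\{t\lambda_N\})\in B\} = \mu_N(B).
\]
This is the standard consequence of Weyl's criterion together with the fact that for a Jordan measurable $B$ one can approximate $\mathbf{1}_B$ from above and below by continuous (indeed trigonometric-polynomial) functions whose integrals differ by less than any prescribed $\varepsilon$.

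Second, I would instantiate this with $\lambda_n = -\frac{\log p_n}{2\pi}$. The hypothesis of rational linear independence is the classical fact that $\log p_1,\ldots,\log p_N$ are linearly independent over $\mathbb{Q}$, which follows from the unique factorization of integers (a rational linear relation would exponentiate to a multiplicative relation among distinct primes). The set $B = S_N^{-1}(R)$ is Jordan measurable by Fact~1 in the preceding text, so the Kronecker--Weyl conclusion applies verbatim and yields precisely \eqref{4-4}, since $W_{N,\sigma}(R;\varphi)$ is defined as $\mu_N(S_N^{-1}(R))$.

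There is no real obstacle in this proposition beyond invoking the correct form of equidistribution; the only technical point to treat carefully is that Kronecker--Weyl in the form needed requires \emph{Jordan} measurability of the target set rather than mere Borel measurability (since the boundary must be a $\mu_N$-null set in order to apply the standard approximation argument), and this is exactly what Fact~1 supplies. Thus the proof amounts to citing Kronecker--Weyl, using the $\mathbb{Q}$-linear independence of $\{\log p_n\}_{n\le N}$, and using Fact~1.
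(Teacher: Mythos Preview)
Your proposal is correct and follows exactly the same approach as the paper: the paper's own argument (given in the paragraph immediately preceding the proposition) consists precisely of rewriting the condition $\log\varphi_N(\sigma+it)\in R$ as membership of the fractional-part vector in $S_N^{-1}(R)$, invoking the $\mathbb{Q}$-linear independence of $\log p_1,\ldots,\log p_N$, and then applying the Kronecker--Weyl theorem, with Fact~1 supplying the Jordan measurability needed for that application. Your emphasis on why Jordan (rather than merely Borel) measurability is the relevant hypothesis is a useful clarification but does not diverge from the paper's route.
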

This is the "finite truncation" version of 
Theorem \ref{limittheorem}.    Therefore, the remaining task to arrive at Theorem
\ref{limittheorem} is to discuss the limit $N\to\infty$.    For this purpose, we consider the
Fourier transform
$$
\Lambda_{N}(w)=\int_{\mathbb{C}}e^{i\langle z,w\rangle}dW_{N,\sigma}(z;\varphi),
$$
where $\langle z,w\rangle=\Re z \Re w+\Im z \Im w$.    Our next aim is to show the following

\begin{prop}\label{limitLambda}
As $N\to\infty$, $\Lambda_{N}(w)$ converges to a certain function 
$\Lambda(w)$, uniformly in
$\{w\in\mathbb{C}\;|\;|w|\leq a\}$ for any $a>0$.
\end{prop}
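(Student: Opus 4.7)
The plan is to exploit the fact that, under the Lebesgue measure on $\mathbb{T}^N$, the summands $z_n(\theta_n)$ in \eqref{4-3} are stochastically independent, since each depends only on the single coordinate $\theta_n$. Consequently the Fourier transform $\Lambda_N(w)$ factorizes: by Fubini,
\begin{align*}
\Lambda_N(w)=\int_{\mathbb{T}^N}e^{i\langle S_N(\theta_1,\dots,\theta_N),w\rangle}d\theta_1\cdots d\theta_N=\prod_{n=1}^{N}K_n(w),
\end{align*}
where $K_n(w):=\int_0^1 e^{i\langle z_n(\theta_n),w\rangle}d\theta_n$. Thus the proposition will reduce to showing that the infinite product $\prod_{n=1}^{\infty}K_n(w)$ converges uniformly on $|w|\leq a$, for which it suffices to prove $\sum_n|K_n(w)-1|$ converges uniformly there.

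To estimate each factor I will use a second-moment bound. Expanding $z_n^{(j)}(\theta_n)=\sum_{k=1}^{\infty}(r_n^{(j)})^k k^{-1}e^{2\pi ikf(j,n)\theta_n}$ shows $\int_0^1 z_n(\theta_n)\,d\theta_n=0$, so applying $|e^{iu}-1-iu|\leq u^2/2$ to $u=\langle z_n(\theta_n),w\rangle\in\mathbb{R}$ gives
\begin{align*}
|K_n(w)-1|=\left|\int_0^1\bigl(e^{i\langle z_n,w\rangle}-1-i\langle z_n,w\rangle\bigr)d\theta_n\right|\leq\frac{|w|^2}{2}\int_0^1|z_n(\theta_n)|^2 d\theta_n.
\end{align*}
By Cauchy--Schwarz $|z_n|^2\leq g(n)\sum_{j}|z_n^{(j)}|^2$, and by orthogonality of the exponentials $\{e^{2\pi ikf(j,n)\theta_n}\}_{k\geq 1}$ (valid because $f(j,n)\geq 1$),
\begin{align*}
\int_0^1|z_n^{(j)}(\theta_n)|^2 d\theta_n=\sum_{k=1}^{\infty}\frac{|r_n^{(j)}|^{2k}}{k^2}\leq\frac{|r_n^{(j)}|^2}{1-|r_n^{(j)}|^2}\leq 2p_n^{2(\beta-\sigma)},
\end{align*}
where I used $|r_n^{(j)}|\leq p_n^{\beta-\sigma}\leq 1/\sqrt{2}$. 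Combined with $g(n)\leq C_1 p_n^{\alpha}$ from \eqref{3-1}, this yields $|K_n(w)-1|\leq C_1^2|w|^2 p_n^{2(\alpha+\beta-\sigma)}$.

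Because $\sigma>\sigma_0\geq\alpha+\beta+1/2$, the exponent satisfies $2(\alpha+\beta-\sigma)<-1$, so the prime number theorem ensures $\sum_n p_n^{2(\alpha+\beta-\sigma)}<\infty$. Hence $\sum_n|K_n(w)-1|$ converges uniformly for $|w|\leq a$. For all sufficiently large $n$ (depending only on $a$) this forces $|K_n(w)-1|\leq 1/2$, so $\log K_n(w)$ is well defined with $|\log K_n(w)|\leq 2|K_n(w)-1|$; therefore $\sum_n \log K_n(w)$ converges uniformly and, after multiplying by the finite initial factor, the product $\Lambda(w):=\prod_{n=1}^{\infty}K_n(w)$ exists with $\Lambda_N(w)\to\Lambda(w)$ uniformly on $|w|\leq a$. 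The only genuinely nonroutine point is the second-moment estimate for $z_n$; once that is in hand, the remainder is the standard probabilistic treatment of characteristic functions of sums of independent random variables, and the growth conditions defining $\mathcal{M}$ are precisely what makes the tail sum converge, so no further obstacles are anticipated.
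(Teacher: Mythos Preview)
Your proof is correct and follows essentially the same route as the paper: the factorization $\Lambda_N(w)=\prod_{n\leq N}K_n(w)$ together with the bound $|K_n(w)-1|\ll |w|^2 p_n^{2(\alpha+\beta-\sigma)}$ and the convergence of $\sum_n p_n^{2(\alpha+\beta-\sigma)}$. The paper obtains the key bound more directly from the pointwise estimate $|z_n(\theta_n)|\ll p_n^{\alpha+\beta-\sigma}$ (so your orthogonality computation of $\int|z_n^{(j)}|^2$ is not needed), and then concludes via the telescoping estimate $|\Lambda_M-\Lambda_N|\leq\sum_{n=N}^{M-1}|\Lambda_n|\,|K_{n+1}-1|$ rather than passing through $\sum\log K_n$; but these are cosmetic differences.
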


\begin{proof}
The proof is quite similar to the argument in \cite[Section 3]{Mat92}.
It is easy to see that
$$
\Lambda_{N}(w)=\int_{\mathbb{T}^N}e^{i\langle S_N(\theta_1,\ldots,\theta_N),w\rangle}
d\mu_N(\theta_1,\ldots,\theta_N),
$$
so in view of \eqref{4-3} we can write
\begin{align}\label{4-5}
\Lambda_N(w)=\prod_{n\leq N}K_n(w)
\end{align}
with
$$
K_n(w)=\int_0^1 e^{i\langle z_n(\theta_n),w\rangle} d\theta_n.
$$
Noting $|z_n^{(j)}(\theta_n)|\ll |r_n^{(j)}|\leq p_n^{\beta-\sigma}$
and \eqref{3-1}, we have
$$
|z_n(\theta_n)|^2=\left|\sum_{j=1}^{g(n)}z_n^{(j)}(\theta_n)\right|^2
\ll p_n^{2(\alpha+\beta-\sigma)}.
$$
Therefore, analogously to \cite[(3.2)]{Mat92}, we obtain
\begin{align}\label{4-7}
|K_n(w)-1|\ll |w|^2 p_n^{2(\alpha+\beta-\sigma)},
\end{align}
which implies
\begin{align}\label{4-8}
|\Lambda_{n+1}(w)-\Lambda_n(w)|=|\Lambda_n(w)|\cdot|K_{n+1}(w)-1|
\ll |w|^2 p_{n+1}^{2(\alpha+\beta-\sigma)}.
\end{align}
Therefore, for $M> N$, 
\begin{align}\label{4-9}
&|\Lambda_{M}(w)-\Lambda_N(w)|\leq \sum_{n=N}^{M-1}|\Lambda_{n+1}(w)-\Lambda_n(w)|\\
&\ll |w|^2 \sum_{n=N}^{M-1}p_{n+1}^{2(\alpha+\beta-\sigma)}
\leq |w|^2 \sum_{n=N}^{\infty}p_{n+1}^{2(\alpha+\beta-\sigma)}.\notag
\end{align}
Since $\sigma>\sigma_0\geq\alpha+\beta+1/2$, the last sum tends to 0 as $N\to\infty$, 
uniformly in the region $|w|\leq a$.    This implies the assertion of the proposition.
\end{proof}

From Proposition \ref{limitLambda}, in view of L{\'e}vy's convergence theorem, we
immediately obtain

\begin{cor}\label{limitW}
There exists a regular probability measure $W_{\sigma}(\cdot\;;\varphi)$, to which 
$W_{N,\sigma}(\cdot\;;\varphi)$ converges weakly as $N\to\infty$, and
\begin{align}\label{4-10}
\Lambda(w)=\int_{\mathbb{C}}e^{i\langle z,w\rangle}dW_{\sigma}(z;\varphi).
\end{align}
\end{cor}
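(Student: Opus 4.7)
The plan is to invoke Lévy's continuity theorem, which is essentially tailored for precisely this situation. By construction, each $\Lambda_N(w)$ is the characteristic function (Fourier transform) of the Borel probability measure $W_{N,\sigma}(\cdot\,;\varphi)$ on $\mathbb{C} \cong \mathbb{R}^2$, and Proposition \ref{limitLambda} gives uniform convergence of $\Lambda_N$ to $\Lambda$ on every disc $\{|w|\le a\}$. So the main task is to verify the hypotheses of Lévy's theorem for the limit function $\Lambda$.

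First I would check that $\Lambda$ is continuous at the origin. Each $K_n(w)$ is continuous in $w$ (by dominated convergence, since $|z_n(\theta_n)|$ is bounded on $[0,1]$), hence each $\Lambda_N(w) = \prod_{n\le N} K_n(w)$ is continuous on $\mathbb{C}$. Because Proposition \ref{limitLambda} provides uniform convergence on every compact set, the limit $\Lambda$ is continuous on all of $\mathbb{C}$; in particular at $w=0$. Moreover $\Lambda_N(0) = 1$ for every $N$, so $\Lambda(0) = 1$.

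With these two properties in hand, Lévy's continuity theorem (in its two-dimensional version) produces a Borel probability measure $W_\sigma(\cdot\,;\varphi)$ on $\mathbb{C}$ such that $W_{N,\sigma}(\cdot\,;\varphi) \Rightarrow W_\sigma(\cdot\,;\varphi)$ weakly as $N \to \infty$, and whose characteristic function is exactly $\Lambda$; this yields \eqref{4-10}. Regularity is automatic: $\mathbb{C}$ is a Polish space (in particular locally compact, second countable, and Hausdorff), so by Ulam's theorem every Borel probability measure on $\mathbb{C}$ is regular, and we are done.

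There is essentially no serious obstacle here, since Proposition \ref{limitLambda} has already done the analytic work. The only point requiring a modicum of care is making sure one cites the correct form of Lévy's theorem, namely the version that does not presuppose tightness but instead deduces it from the continuity of the pointwise limit at $0$; this continuity is guaranteed for free by the uniform (on compact sets) convergence in Proposition \ref{limitLambda}.
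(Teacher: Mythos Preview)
Your proposal is correct and follows exactly the approach the paper takes: the authors simply state that the corollary follows ``immediately'' from Proposition~\ref{limitLambda} ``in view of L{\'e}vy's convergence theorem,'' and you have supplied the routine details of that invocation. There is nothing to add.
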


Moreover, taking the limit $M\to\infty$ on \eqref{4-9}, we obtain
\begin{align}\label{4-11}
|\Lambda(w)-\Lambda_N(w)|\ll |w|^2 \sum_{n=N}^{\infty}p_{n+1}^{2(\alpha+\beta-\sigma)}.
\end{align}

\section{Proof of Theorem \ref{limittheorem}}\label{sec5}

In this section we show how to prove Theorem \ref{limittheorem} in the framework of our
present method.    The argument is very similar to that given in \cite[Sections 3 and 4]{Mat92},
so we omit some details.

First, using Fact 2 in Section \ref{sec4}, we can show 
(analogously to the argument in the last part of \cite[Section 3]{Mat92})
that $R$ is a continuity set
with respect to $W_{\sigma}$, and hence
\begin{align}\label{5-1}
W_{\sigma}(R;\varphi)=\lim_{N\to\infty}W_{N,\sigma}(R;\varphi).
\end{align}

Now, following the method in \cite[Section 4]{Mat92}, we prove Theorem
\ref{limittheorem}.     Put
$$
R_N(s;\varphi)=\log\varphi(s)-\log\varphi_N(s),\quad
f_N(s;\varphi)=\frac{\varphi(s)}{\varphi_N(s)}-1.
$$

When $\sigma>\alpha+\beta+1$, since 
\begin{align}\label{5-2}
R_N(s;\varphi)\ll \sum_{n>N}\sum_{j=1}^{g(n)}|a_n^{(j)}|p_n^{-f(j,n)\sigma}
\ll \sum_{n>N}p_n^{\alpha+\beta-\sigma}
\end{align}
which tends to 0 as $N\to\infty$, 
the assertion of the theorem directly follows from Proposition
\ref{N-limittheorem} and \eqref{5-1}.

In the case $\sigma_0<\sigma\leq \alpha+\beta+1$, naturally we have to discuss more
carefully.   Let $\delta>0$, and define
\begin{align*}
K_N^{\delta}(T;\varphi)=\left\{t\in[-T,T]\;\left|\;
\begin{array}{ll}
\sigma+it\in G(\varphi),\\
|\log\varphi(\sigma+it)-\log\varphi_N(\sigma+it)|\geq\delta
\end{array}
\right.\right\},
\end{align*}
and $k_N^{\delta}(T;\varphi)=\mu_1(K_N^{\delta}(T;\varphi))$.
We will prove that $k_N^{\delta}(T;\varphi)$ is negligible, that is, for any $\varepsilon>0$
we can choose $N_0=N_0(\delta,\varepsilon)$ for which
\begin{align}\label{5-3}
\limsup_{T\to\infty}T^{-1}k_N^{\delta}(T;\varphi)\leq\varepsilon
\end{align}
holds for any $N\geq N_0$.

Let $\alpha_0=\sigma-\varepsilon$, $\alpha_1=\sigma-2\varepsilon$.    We choose
$\varepsilon$ so small that $\sigma_0<\alpha_1<\alpha_0<\sigma$.
For any $t_0\in[-T,T]$, put
\begin{align*}
H(t_0)=\{s\;|\;\sigma>\alpha_0,t_0-1/2<t<t_0+1/2\},
\end{align*}
and define $\psi_N^{\delta}(t_0;\varphi)=0$ if $H(t_0)\subset G(\varphi)$ and
$|R_N(s;\varphi)|<\delta$ for any $s\in H(t_0)$, and $\psi_N^{\delta}(t_0;\varphi)=1$
otherwise.    Then clearly
\begin{align}\label{5-4}
k_N^{\delta}(T;\varphi)\leq \int_{-T}^T \psi_N^{\delta}(t_0;\varphi)dt_0.
\end{align}
Using \eqref{5-2} we can find $\beta_0=\alpha+\beta+1+C\delta^{-1}$ (with an absolute
positive constant $C$) for which $|R_N(s;\varphi)|<\delta$ holds for any $s$ satisfying
$\sigma\geq \beta_0$.
Let $Q(t_0)=H(t_0)\cap \{s\;|\;\sigma<\beta_0\}$.

\begin{lemma}\label{bohrlemma}
If $|f_N(s;\varphi)|<\delta/2$ for any $s\in Q(t_0)$, then 
$\psi_N^{\delta}(t_0;\varphi)=0$.
\end{lemma}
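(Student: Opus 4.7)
The plan is to verify the two conditions defining $\psi_N^\delta(t_0;\varphi)=0$: that $H(t_0)\subset G(\varphi)$, and that $|R_N(s;\varphi)|<\delta$ throughout $H(t_0)$. I would split $H(t_0)=Q(t_0)\cup\bigl(H(t_0)\cap\{\sigma\geq\beta_0\}\bigr)$. On the right piece both conclusions come for free: $|R_N|<\delta$ is built into the choice of $\beta_0$, and $\varphi$ is given there by an absolutely convergent Euler product, hence is holomorphic and non-vanishing. On $Q(t_0)$, the truncated product $\varphi_N$ is also holomorphic and non-vanishing, since each factor $1-a_n^{(j)}p_n^{-f(j,n)s}$ satisfies $|a_n^{(j)}p_n^{-f(j,n)\sigma}|\leq p_n^{\beta-\sigma}\leq p_n^{-1/2}<1$ for $\sigma\geq\alpha_0>\alpha+\beta+1/2$. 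The hypothesis $|f_N(s;\varphi)|<\delta/2$ (we may assume $\delta<1$, which is the only case relevant to \eqref{5-3}) then traps $\varphi/\varphi_N=1+f_N$ in the annulus $\{w\in\mathbb{C}:1/2<|w|<3/2\}$, so $\varphi$ inherits from $\varphi_N$ the absence of zeros and poles on $Q(t_0)$. Combined with the right piece, this gives $H(t_0)\subset G(\varphi)$, and in particular $R_N(s;\varphi)$ is holomorphic on $H(t_0)$.

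Next I would bound $R_N$ on $Q(t_0)$ by a branch-matching argument. Set $L(s):=\mathrm{Log}(1+f_N(s;\varphi))$, the principal-branch logarithm, holomorphic on $Q(t_0)$ since $|f_N|<\delta/2<1$, with $|L(s)|\leq -\log(1-\delta/2)<\delta$. Because $\exp(R_N)=\varphi/\varphi_N=\exp(L)$, the difference $R_N-L$ is a continuous $2\pi i\mathbb{Z}$-valued function on the connected open rectangle $Q(t_0)$, and is therefore a single constant. To identify this constant I would extend by continuity to the right boundary $\sigma=\beta_0$: there $|R_N|<\delta$ by the defining property of $\beta_0$ and $|L|\leq -\log(1-\delta/2)<\delta$, so $|R_N-L|<2\delta<2\pi$. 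This forces the constant to be $0$, whence $R_N\equiv L$ on $Q(t_0)$ and $|R_N(s;\varphi)|<\delta$ throughout $H(t_0)$.

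The one genuinely delicate point is this branch identification: $R_N$ (defined by horizontal analytic continuation from $\sigma>\alpha+\beta+1$) and $L$ (defined by a local principal-branch formula) are a priori equal only modulo $2\pi i$, and it is the smallness of both at the anchor $\sigma=\beta_0$ that selects the correct sheet. Everything else is a routine continuity-plus-connectedness argument.
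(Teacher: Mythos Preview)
Your argument is correct and is precisely the classical Bohr argument that the paper invokes: the paper does not actually give a proof of Lemma~\ref{bohrlemma} but refers to \cite[Hilfssatz~5]{Boh15} (via \cite[Lemma~2]{Mat92}), and what you have written is a faithful reconstruction of that proof---showing first that $\varphi$ is zero- and pole-free on $H(t_0)$ (hence $H(t_0)\subset G(\varphi)$), and then identifying $R_N$ with the principal logarithm $\mathrm{Log}(1+f_N)$ on $Q(t_0)$ by a branch-matching argument anchored at $\sigma=\beta_0$.

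One small remark: your restriction to $\delta<1$ is not part of the lemma's statement, and the inequality $-\log(1-\delta/2)<\delta$ (as well as $2\delta<2\pi$) does fail once $\delta$ is large enough. This is harmless, since the lemma is only ever applied for small $\delta$ in the proof of \eqref{5-3}, and indeed the hypothesis $|f_N|<\delta/2$ cannot even exclude zeros of $\varphi$ once $\delta\geq 2$; but strictly speaking the lemma as stated carries this implicit smallness assumption on $\delta$, inherited from Bohr's original formulation.
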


This is a generalization of \cite[Lemma 2]{Mat92}, which further goes back to Bohr
\cite[Hilfssatz 5]{Boh15}.    Bohr's proof in \cite{Boh15} can be applied without
change to the above general case, so we omit the proof.

Let $\beta_1=2\beta_0$, and let $P(t_0)$ be the rectangle given by
$\alpha_1\leq\sigma\leq\beta_1$, $t_0-1\leq t\leq t_0+1$.    Put
$$
F_N(t_0;\varphi)=\int\!\!\!\int_{P(t_0)} |f_N(s;\varphi)|^2 d\sigma dt.
$$
(This can be defined only when $P(t_0)$ does not include a pole of $\varphi(s)$.)
We use Lemma \ref{bohrlemma} and \cite[Lemma 3]{Mat92} 
(which is \cite[Hilfssatz 4]{Boh15}) to see that if
$$
F_N(t_0;\varphi) < \pi\left(\varepsilon/2\right)^2
\left(\delta/2\right)^2
$$
then $\psi_N^{\delta}(t_0;\varphi)=0$.    Therefore
\begin{align}\label{5-5}
\frac{1}{2T}\int_{-T}^T \psi_N^{\delta}(t_0;\varphi)dt_0\leq b+
\frac{\mu_1(\mathcal{S})}{2T},
\end{align}
where $\mathcal{S}$ is the set of all $t\in[-T,T]$ for which we can find a pole $s'$ of
$\varphi(s)$ satisfying $|t-\Im s'|\leq 2$, and
$$
b=\frac{1}{2T}\mu_1\biggl(\biggl\{t_0\in [-T,T]\setminus\mathcal{S}\; \biggl| \;
F_N(t_0;\varphi)\geq \pi(\varepsilon/2)^2(\delta/2)^2\biggr.\biggr\}\biggr).
$$
From the definition of $b$ we obtain
\begin{align*}
&\pi(\varepsilon/2)^2(\delta/2)^2 b
\leq \frac{1}{2T}\int_{t_0\in [-T,T]\setminus\mathcal{S}} F_N(t_0;\varphi)dt_0\\
&\quad=\frac{1}{2T}\int_{\alpha_1}^{\beta_1}\int_{-T-1}^{T+1}|f_N(s;\varphi)|^2\int^{\#}
 dt_0 dt d\sigma,
\end{align*}
where the innermost integral (with the $\#$ symbol) is on 
$t_0\in[-T,T]\setminus\mathcal{S}$, $t-1\leq t_0\leq t+1$.
This innermost integral is trivially $\leq 2$, and is equal to 0 if there exists a pole
$s'$ of $\varphi(s)$ such that $|t-\Im s'|\leq 1$ (because then all $t_0\in[t-1,t+1]$
belongs to $\mathcal{S}$).    Therefore
\begin{align}\label{5-6}
\pi(\varepsilon/2)^2(\delta/2)^2 b\leq \frac{1}{T}\int_{\alpha_1}^{\beta_1}
\int_{J(T+1)} |f_N(s;\varphi)|^2 dtd\sigma,
\end{align}
where 
$$
J(T)=\{t\in [-T,T]\;|\;|t-\Im s'|>1 \;\mbox{for any pole} \;s'\; \mbox{of}\; \varphi(s)\}.
$$
From \eqref{5-4}, \eqref{5-5} and \eqref{5-6} we now obtain
\begin{align}\label{5-7}
\frac{1}{2T}k_N^{\delta}(T;\varphi)\leq \frac{1}{\pi(\varepsilon/2)^2(\delta/2)^2 T}
\int_{\alpha_1}^{\beta_1}
\int_{J(T+1)} |f_N(s;\varphi)|^2 dtd\sigma + \frac{\mu_1(\mathcal{S})}{2T}.
\end{align}

On the double integral on the right-hand side, as an analogue of \cite[Lemma 4]{Mat92}, we can show the following lemma.

\begin{lemma}\label{carlson}
For any $\eta>0$, There exists $N_0=N_0(\eta)$, such that
\begin{align}\label{5-8}
\frac{1}{T}\int_{\alpha_1}^{\beta_1}
\int_{J(T+1)} |f_N(s;\varphi)|^2 dtd\sigma < \eta
\end{align}
for any $N\geq N_0$ and any $T\geq T_0$ with some $T_0=T_0(N)$.
\end{lemma}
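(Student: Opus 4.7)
The plan is a Carlson-type mean-value analysis. For any fixed $N$, the quotient
$$
\frac{\varphi(s)}{\varphi_N(s)} = \prod_{n > N} A_n(p_n^{-s})^{-1}
$$
admits, in the half-plane $\sigma > \alpha+\beta+1$, the absolutely convergent Dirichlet expansion $\varphi(s)/\varphi_N(s) = 1 + \sum_{m \in \mathcal{N}_N^*} c_m(N) m^{-s}$, where $\mathcal{N}_N^* := \{ m \in \mathbb{N} : m > 1,\ p\mid m \Rightarrow p > p_N\}$. The strategy is to identify the mean-square integral in \eqref{5-8} with the Parseval-type sum $\sum_m |c_m(N)|^2 m^{-2\sigma}$, up to an elementary factor, and then to show this sum is small uniformly in $\sigma$ for large $N$.

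\textbf{Step 1 (smallness of the Parseval sum).} By the multiplicativity built into the Euler-product form,
$$
1 + \sum_{m \in \mathcal{N}_N^*} |c_m(N)|^2 m^{-2\sigma} = \prod_{n > N} \Bigl( \sum_{k \geq 0} |b_{n,k}|^2 p_n^{-2k\sigma} \Bigr),
$$
where $b_{n,k}$ denote the Taylor coefficients of $A_n(X)^{-1}$. Bounding $|b_{n,k}|$ through \eqref{3-1} shows that each factor equals $1 + O(p_n^{2(\alpha+\beta-\sigma)})$, with implicit constant uniform for $\sigma$ in any compact subset of $(\alpha+\beta+1/2,\infty)$. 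Since $\alpha_1 > \sigma_0 \geq \alpha+\beta+1/2$, the product tends to $1$ as $N \to \infty$, uniformly for $\sigma \in [\alpha_1, \beta_1]$, and hence $\sup_{\sigma \in [\alpha_1, \beta_1]} \sum_m |c_m(N)|^2 m^{-2\sigma} \to 0$.

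\textbf{Step 2 (Carlson-type mean value).} We apply a Carlson-type theorem to $f_N(\cdot;\varphi)$. Its hypotheses are readily verified: $\varphi_N$ is a finite product of rational functions that is zero-free and bounded away from $0$ on $\sigma \geq \sigma_0$ (because $|r_n^{(j)}| \leq 1/\sqrt{2}$ there), so $f_N$ inherits from $\varphi$ the meromorphic continuation to $\sigma \geq \sigma_0$ with only finitely many poles, the polynomial growth on vertical lines coming from (ii), and the $L^2$-bound $\int_{J(T+1)} |f_N(\sigma_0+it)|^2 dt \leq C(N) T$ coming from (iii). The theorem then yields, uniformly for $\sigma \in [\alpha_1, \beta_1]$,
$$
\lim_{T \to \infty} \frac{1}{T}\int_{J(T+1)} |f_N(\sigma+it)|^2 dt = 2 \sum_{m \in \mathcal{N}_N^*} |c_m(N)|^2 m^{-2\sigma}.
$$
The restriction to $J(T+1)$ is the standard device for excising the finitely many poles of $\varphi$ in the strip. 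Integrating over $\sigma \in [\alpha_1, \beta_1]$ and combining with Step 1 yields \eqref{5-8} for $N \geq N_0(\eta)$ and $T \geq T_0(N)$.

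\textbf{Main obstacle.} The technical core is the Carlson-type theorem in the general setting, in particular the uniformity of the $T$-limit in $\sigma \in [\alpha_1, \beta_1]$ and the handling of the pole-exclusion domain $J(T+1)$. This is the direct analogue of \cite[Lemma 4]{Mat92}, whose argument for Dedekind zeta-functions transfers essentially verbatim to the present abstract framework, with hypothesis (iii) playing the role that Potter's mean-square theorem \cite{Pot40} played there.
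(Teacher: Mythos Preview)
Your proposal is correct and follows the same overall strategy as the paper: reduce to a Carlson mean-value identity and then show the resulting Parseval sum $\sum'_k |c_k|^2 k^{-2\sigma}$ is small for large $N$. The one noteworthy difference is in how you control that Parseval sum. The paper invokes the pointwise coefficient bound $c_k = O(k^{\alpha+\beta+\varepsilon})$ for $k$ coprime to $p_1\cdots p_N$ (imported from \cite[Appendix]{KM17}) and then sums crudely over $k\geq p_{N+1}$. You instead exploit multiplicativity to write the Parseval sum as the Euler product $\prod_{n>N}\bigl(\sum_{k\geq 0}|b_{n,k}|^2 p_n^{-2k\sigma}\bigr)$ and bound each local factor as $1+O(p_n^{2(\alpha+\beta-\sigma)})$; this is self-contained and avoids the external reference, at the cost of needing the elementary inequality $\sum_{k\geq 1}|b_{n,k}|^2 p_n^{-2k\sigma}\leq\bigl(\sum_{k\geq 1}|b_{n,k}|p_n^{-k\sigma}\bigr)^2$ together with the estimate $(1-p_n^{\beta-\sigma})^{-g(n)}-1=O(p_n^{\alpha+\beta-\sigma})$. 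Both arguments give what is needed for $\sigma>\alpha+\beta+1/2$, and the verification of Carlson's hypotheses (boundedness of $\varphi_N^{-1}$ on $\sigma\geq\sigma_0$, $O(T)$ mean square on $J(T)$ via (iii) and convexity) is identical in substance.
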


\begin{proof}
Write the Dirichlet series expansion of $\varphi(s)$ in the region 
$\sigma>\alpha+\beta+1$ as
$$
\varphi(s)=\sum_{k=1}^{\infty}c_k k^{-s}.
$$
Then the Dirichlet series expansion of $f_N(s)$ is
\begin{align*}
f_N(s;\varphi)={\sum_{k}}^{\prime}c_k k^{-s},
\end{align*}
where the symbol $\sum^{\prime}$ means that the summation is restricted to $k>1$ which is
co-prime with $p_1p_2\cdots p_N$.    In \cite[Appendix]{KM17} it has been shown that,
for any $\varepsilon>0$, we can choose a sufficiently large $N=N(\varepsilon)$ such that
\begin{align}\label{5-9}
c_k=O(k^{\alpha+\beta+\varepsilon})
\end{align}
for all $k$ co-prime with $p_1p_2\cdots p_N$. 

By \eqref{3-2} and the convexity principle we have
\begin{align}\label{5-10}
\int_{J(T)}|\varphi(\sigma+it)|^2 dt=O(T)
\end{align}
for any $\sigma\geq \sigma_0$.   On the other hand, using \eqref{4-1} we have
\begin{align*}
\varphi_N(\sigma+it)^{-1}
\leq\exp\left(C\sum_{n\leq N}\sum_{j=1}^{g(n)}|a_n^{(j)}|p_n^{-f(j,n)\sigma}\right)
\leq\exp\left(C' N^{\alpha+\beta+1-\sigma}\right)
\end{align*}
(where $C,C'$ are positive constants).    Combining this estimate with \eqref{5-10} 
we obtain
$$
\frac{1}{T}\int_{J(T)}|f_N(\sigma+it;\varphi)|^2 dt \ll 
\exp\left(2C' N^{\alpha+\beta+1-\sigma}\right),
$$
which is $O(1)$ with respect to $T$.    Therefore by Carlson's mean value theorem
(see \cite[Section 9.51]{Tit39})
\begin{align}\label{5-11}
\lim_{T\to\infty}\frac{1}{T}\int_{J(T)}|f_N(\sigma+it;\varphi)|^2 dt 
= {\sum_{k}}^{\prime}c_k^2 k^{-2\sigma},
\end{align}
uniformly in $\sigma$.    Using \eqref{5-9}, we can estimate the right-hand side of
\eqref{5-11} as
$$
\ll \sum_{k\geq p_{N+1}}k^{2(\alpha+\beta+\varepsilon-\sigma)}
\ll N^{1+2(\alpha+\beta+\varepsilon-\sigma)},
$$
whose exponent is negative for $\sigma>\sigma_0$ (if $\varepsilon$ is sufficiently small).
This immediately implies the assertion of the lemma.
\end{proof}

Now, applying Lemma \ref{carlson} with $\eta=\pi\delta^2\varepsilon^3/16$ to
\eqref{5-7}, we arrive at \eqref{5-3}.
The assertion of the theorem in the case $\sigma_0<\sigma\leq\alpha+\beta+1$ then follows
by the same argument as in the last part of \cite[Section 4]{Mat92}.

\section{The density function}\label{sec6}

In this section $\sigma$ is any real number larger than $\sigma_0$.
We discuss when it is possible to show that
$W_{\sigma}(\cdot;L_f)$ is absolutely continuous.    Then by measure theory we can write
\begin{align}\label{6-1}
W_{\sigma}(R;\varphi)=\int_R \mathcal{M}_{\sigma}(z,\varphi)|dz|
\end{align}
with the Radon-Nikod{\'y}m density function $\mathcal{M}_{\sigma}(z;\varphi)$.

For this purpose, we aim to show
\begin{align}\label{6-2}
\Lambda_N(w)=O(|w|^{-(2+\eta)}) \qquad (|w|\to\infty)
\end{align}
uniformly in $N$, with some $\eta>0$.    

If \eqref{6-2} is valid, then 
$$
\int_{\mathbb{C}}|\Lambda_N(w)||dw|<\infty.
$$
Therefore $W_{N,\sigma}$ is absolutely continuous, and the
Radon-Nikod{\'y}m density function $\mathcal{M}_{N,\sigma}(z;\varphi)$ is given by
\begin{align}\label{6-3}
\mathcal{M}_{N,\sigma}(z;\varphi)=\int_{\mathbb{C}}
e^{-i\langle z,w\rangle}\Lambda_N(w)|dw|
\end{align} 
and is continuous
(see \cite[p.53]{JW35}, \cite[p.105]{BJ48}).
Moreover, the above uniformity in $N$ implies that the same estimate as \eqref{6-2}
is valid for the limit function $\Lambda(w)$.    Therefore $W_{\sigma}$ is also
absolutely continuous, hence \eqref{6-1} is valid with
the continuous density function given by
\begin{align}\label{6-4}
\mathcal{M}_{\sigma}(z;\varphi)=\int_{\mathbb{C}}
e^{-i\langle z,w\rangle}\Lambda(w)|dw|.
\end{align} 

The following proposition reduces the problem to the evaluation of $K_n(w)$:

\begin{prop}\label{at-least-5-n}
If there are at least five $n$'s, say $n_1,\ldots,n_5$, for which
$K_n(w)=O_n(|w|^{-1/2})$ holds as $|w|\to\infty$, then \eqref{6-2} is valid
for any $N\geq\max\{n_1,\ldots,n_5\}$, and so \eqref{6-1} and
\eqref{6-4} are also valid.
\end{prop}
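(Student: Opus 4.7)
The plan is to combine the product formula $\Lambda_N(w) = \prod_{n \leq N} K_n(w)$ from \eqref{4-5} with the trivial bound $|K_n(w)| \leq 1$ that holds for every $n$. The latter is immediate: $K_n$ is the Fourier transform of the pushforward of the Lebesgue measure on $[0,1]$ under the continuous map $\theta_n \mapsto z_n(\theta_n)$, hence the characteristic function of a probability measure on $\mathbb{C}$.

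Given $n_1, \ldots, n_5$ as in the hypothesis, set $N^{\ast} = \max\{n_1, \ldots, n_5\}$. For any $N \geq N^{\ast}$, splitting the product and bounding every factor outside $\{n_1, \ldots, n_5\}$ by $1$ gives
$$
|\Lambda_N(w)| \leq \prod_{i=1}^{5} |K_{n_i}(w)| = O\bigl(|w|^{-5/2}\bigr),
$$
where the implicit constant depends on $n_1, \ldots, n_5$ but not on $N$. This is precisely \eqref{6-2} with $\eta = 1/2$, uniformly in $N \geq N^{\ast}$. The conclusions \eqref{6-1}, \eqref{6-3}, and \eqref{6-4} are then produced by the Fourier-inversion mechanism recalled at the start of this section: the integrability of $\Lambda_N$ delivers the continuous density $\mathcal{M}_{N,\sigma}$, while \eqref{4-11} gives $\Lambda_N \to \Lambda$ pointwise so that the same envelope $|w|^{-5/2}$ also bounds $\Lambda$, yielding absolute continuity of $W_{\sigma}$ and the density $\mathcal{M}_{\sigma}$.

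There is essentially no obstacle in the proposition itself; it is pure bookkeeping once the characteristic-function viewpoint is fixed and the hypothesis is taken for granted. The genuine difficulty is hidden in the hypothesis: producing five indices $n$ for which $K_n(w)$ actually decays like $|w|^{-1/2}$. Writing
$$
K_n(w) = \int_0^1 \exp\bigl(i\langle z_n(\theta_n), w\rangle\bigr)\, d\theta_n,
$$
this decay is a stationary-phase assertion about the plane curve $\theta_n \mapsto z_n(\theta_n)$, and it is precisely where the arithmetic input \eqref{2-6} — the existence of a positive-density set of primes with $|\lambda_f(p)|$ bounded away from $0$ — must be exploited, as carried out in Proposition \ref{prop-curve}.
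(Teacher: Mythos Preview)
Your proof is correct and follows exactly the paper's own argument: apply the decay $K_{n_i}(w)=O(|w|^{-1/2})$ at the five distinguished indices, bound all other factors in \eqref{4-5} by $|K_n(w)|\leq 1$, and read off \eqref{6-2} with $\eta=1/2$ uniformly in $N$, after which the Fourier-inversion machinery already set up in the section yields \eqref{6-1} and \eqref{6-4}. Your additional commentary on why $|K_n(w)|\leq 1$ and on where the genuine arithmetic work resides is accurate and welcome.
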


\begin{remark}
The proof of \eqref{6-2} in the above proposition is simple: just apply
$K_n(w)=O_n(|w|^{-1/2})$ (for $n_1,\ldots,n_5$) and the trivial estimate
$|K_n(w)|\leq 1$ to the product formula \eqref{4-5}.    
The result is \eqref{6-2} with $\eta=1/2$, uniform in $N$. 
\end{remark}

\begin{remark}\label{rem-quantitative}
The existence of the density function is useful for quantitative studies.
For instance, if there are at least ten $n$'s with $K_n(w)=O(|w|^{-1/2})$, then 
$\Lambda_N(w)=O(|w|^{-5})$ for large $N$.    This fact with \eqref{4-7}, \eqref{4-11}
leads the estimate
\begin{align}\label{6-5}
|W_{\sigma}(R;L_f)-W_{N,\sigma}(R;L_f)|=O(\mu_2(R)N^{1+2(\alpha+\beta-\sigma)}
(\log N)^{2(\alpha+\beta-\sigma)})
\end{align}
for $\sigma>\sigma_0$, as an analogue of \cite[(6.4)]{Mat92}.
\end{remark}

In \cite{Mat92}, when $\varphi=\zeta_K$ (the Dedekind zeta-function of a Galois number
field $K$), the key estimate \eqref{6-2} was proved by using \cite[Theorem 13]{JW35}.     
In this case,
$\zeta_K$ has the Euler product of the form \eqref{3-0} with 
$f(1,n)=\cdots=f(g(n),n)$ ($=f(n)$, say, the inertia degree) and $a_n^{(j)}=1$ (and hence
$r_n^{(1)}=\cdots=r_n^{(g(n))}=p_n^{-f(n)\sigma}$ ($=r_n$, say)).   Therefore
$$
z_n(\theta_n)=-g(n)\log(1-r_n e^{2\pi if(n)\theta_n}),
$$
which describes a curve when $\theta_n$ moves from 0 to 1.    This curve is
convex, so the original Jessen-Wintner inequality (\cite[Theorem 13]{JW35}) can be
directly applied.    In this case we encounter only one type of curve, that is, the curve
$-\log(1-\xi)$ ($\xi\in\mathbb{C}$, $|\xi|=r_n$).

When $K$ is non-Galois, $f(1,n),\ldots,f(g(n),n)$ are not necessarily the same as each
other, so 
$$
z_n(\theta_n)=-\sum_{j=1}^{g(n)}\log(1-r_n^{(j)}e^{2\pi if(j,n)\theta_n}).
$$
However, still in this case, the number of relevant types of curves 
$$
-\sum_{j=1}^{g(n)}\log(1-\xi^{f(j,n)}) \qquad(\xi\in\mathbb{C},\; |\xi|=p_n^{-\sigma})
$$ 
is finite, because there are only finitely many patterns of the decomposition of prime
numbers into prime ideals in $K$. 
Because of this finiteness, we can use \cite[Lemma 2]{Mat07} (which is a simple
generalization of \cite[Theorem 13]{JW35}) to show \eqref{6-2} in this case.
The case of Hecke $L$-functions of ideal class characters can be treated in a
similar way.

However in the automorphic case, we encounter infinitely many types of curves,
because in this case $z_n(\theta_n)$ describes a curve
\begin{align}\label{6-6}
-\log(1-\alpha_f(p_n)\xi)-\log(1-\beta_f(p_n)\xi)  \qquad (\xi\in\mathbb{C},\;
|\xi|=p_n^{-\sigma}),
\end{align}
which depends on $\alpha_f(p_n),\beta_f(p_n)$.
Therefore we have to prove a new type of Jessen-Wintner inequality, suitable for
the automorphic case.    This will be done in the next section.

\section{An analogue of the Jessen-Wintner inequality for automorphic 
$L$-functions}\label{sec7}

Now we restrict ourselves to the case of automorphic $L$-functions.    Except for 
the (finitely many) prime factors of $N$, the Euler factor of $L_f(s)$ is of the form
$$
(1-\alpha_f(p_n)p_n^{-s})^{-1}(1-\beta_f(p_n)p_n^{-s})^{-1},
$$
so $z_n(\theta_n)=A_n(p_n^{-\sigma}e^{2\pi i\theta_n})$ with
$$
A_n(X)=-\log(1-\alpha_f(p_n)X)-\log(1-\beta_f(p_n)X).
$$
When $\theta_n$ moves from 0 to 1, the points $z_n(\theta_n)$ describes a curve 
\eqref{6-6} on the
complex plane, which we denote by $\Gamma_n$.

Let $x_{n}(\theta_n)=\Re z_n(\theta_n)$ and $y_{n}(\theta_n)=\Im z_n(\theta_n)$.
Writing $w=|w|e^{i\tau}$ ($\tau\in[0,2\pi)$) we have
$w=|w|\cos\tau+i|w|\sin\tau$.   Then
\begin{align}\label{7-1}
\langle z_n(\theta_n),w\rangle=|w| g_{\tau,n}(\theta_n),
\end{align}
where
\begin{align*}
g_{\tau,n}(\theta_n)=x_{n}(\theta_n)\cos\tau+y_{n}(\theta_n)\sin\tau.
\end{align*}
Therefore
\begin{align}\label{7-2}
K_n(w)=\int_0^1 e^{i|w|g_{\tau,n}(\theta_n)}d\theta_n.
\end{align}

\begin{lemma}\label{lemma-curve}
Let $n\in\mathbb{N}$ such that $p_n\nmid N$.
For any fixed $\tau$, the function $g_{\tau,n}(\theta_n)$ (as a function in $\theta_n$) is
a $C^{\infty}$-class function.    Moreover, if $p_n\in\mathbb{P}_f(\varepsilon)$ and
$n$ is sufficiently large, then 
$g_{\tau,n}^{\prime\prime}(\theta_n)$ has exactly two zeros 
on the interval $[0,1)$.
\end{lemma}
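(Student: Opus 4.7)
My plan is to expand $z_n(\theta_n)$ as a convergent Fourier series, then show that after two differentiations the $k=1$ harmonic dominates, and deduce the zero count by perturbing from that harmonic.

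Write $\alpha_f(p_n)=e^{i\phi_n}$, so $\beta_f(p_n)=e^{-i\phi_n}$, and expand using $-\log(1-w)=\sum_{k\geq 1}w^{k}/k$ (valid since $p_n^{-\sigma}\leq 1/\sqrt{2}$ in the range under consideration):
\begin{align*}
z_n(\theta_n)=\sum_{k=1}^{\infty}\frac{2\cos(k\phi_n)}{k}p_n^{-k\sigma}e^{2\pi ik\theta_n}.
\end{align*}
Combining with $w=|w|e^{i\tau}$ as in \eqref{7-1},
\begin{align*}
g_{\tau,n}(\theta_n)=\sum_{k=1}^{\infty}c_k\cos(2\pi k\theta_n-\tau),\qquad c_k:=\frac{2\cos(k\phi_n)}{k}p_n^{-k\sigma}.
\end{align*}
Since $|c_k|\leq 2p_n^{-k\sigma}/k$ and the $\ell$th termwise derivative only inserts a factor $(2\pi k)^{\ell}$, every derivative series converges absolutely and uniformly in $\theta_n$, so $g_{\tau,n}\in C^{\infty}$; this settles the first assertion.

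For the zero count, I would differentiate twice and isolate the $k=1$ contribution,
\begin{align*}
g_{\tau,n}''(\theta_n)=-(2\pi)^2 c_1\cos(2\pi\theta_n-\tau)+R(\theta_n),\qquad R(\theta_n):=-(2\pi)^2\sum_{k\geq 2}k^2 c_k\cos(2\pi k\theta_n-\tau),
\end{align*}
with $\|R\|_{\infty}\ll p_n^{-2\sigma}$ by a geometric-series bound. The hypothesis $p_n\in\mathbb{P}_f(\varepsilon)$ forces $|c_1|=|\lambda_f(p_n)|p_n^{-\sigma}>(\sqrt{2}-\varepsilon)p_n^{-\sigma}$, so the leading cosine has amplitude $\asymp p_n^{-\sigma}$, strictly dominating $\|R\|_{\infty}$ for $n$ large. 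The unperturbed $-c_1\cos(2\pi\theta_n-\tau)$ has two simple zeros $\theta_n^{\pm}$ on $[0,1)$ determined by $2\pi\theta_n^{\pm}-\tau\equiv\pm\pi/2\pmod{2\pi}$; the task is to show that the perturbation $R$ preserves the count, by a localization argument.

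Fix the scale $\delta_n:=p_n^{-\sigma/2}$. Outside the two arcs $|2\pi\theta_n-\tau\mp\pi/2|<\delta_n$, the leading term has magnitude $\gg p_n^{-\sigma}\delta_n=p_n^{-3\sigma/2}$, which beats $\|R\|_{\infty}\ll p_n^{-2\sigma}$ for $n$ large, so $g_{\tau,n}''$ inherits the nonzero sign of $-c_1\cos(2\pi\theta_n-\tau)$ and vanishes nowhere there. Inside each arc, differentiate once more,
\begin{align*}
g_{\tau,n}'''(\theta_n)=(2\pi)^3 c_1\sin(2\pi\theta_n-\tau)+\widetilde{R}(\theta_n),\qquad \|\widetilde{R}\|_{\infty}\ll p_n^{-2\sigma}.
\end{align*}
Since $|\sin(2\pi\theta_n-\tau)|\geq\cos\delta_n\geq 1/2$ throughout such an arc, $|g_{\tau,n}'''|\gg p_n^{-\sigma}$ for $n$ large, so $g_{\tau,n}''$ is strictly monotonic there; its values at the two endpoints of the arc are of opposite sign (again controlled by the leading term), so by the intermediate value theorem it has exactly one zero per arc, i.e.\ exactly two zeros on $[0,1)$ in total.

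The main obstacle is the quantitative calibration of the scale $\delta_n$: it must be wide enough that the first-harmonic amplitude $\asymp p_n^{-\sigma}\delta_n$ beats the $O(p_n^{-2\sigma})$ remainder outside the arcs, yet narrow enough that $\sin(2\pi\theta_n-\tau)$ stays bounded away from zero inside them so the monotonicity argument survives. The lower bound $|\lambda_f(p_n)|>\sqrt{2}-\varepsilon$ from $\mathbb{P}_f(\varepsilon)$ keeps $c_1$ away from zero and is what enables this calibration; for the present lemma any positive lower bound on $|\lambda_f(p_n)|$ would in fact suffice, with the sharper $\sqrt{2}$-threshold becoming essential only in the subsequent amplitude estimate for $K_n(w)$.
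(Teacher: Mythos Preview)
Your proof is correct and follows essentially the same route as the paper: expand $z_n$ as a power series, isolate the first harmonic in $g_{\tau,n}''$, bound the tail by $O(p_n^{-2\sigma})$, and use the third derivative to establish monotonicity near the two candidate zeros. The only differences are cosmetic---you exploit the realness of $a_k=\frac{2\cos(k\phi_n)}{k}$ to write $g_{\tau,n}$ directly as $\sum_k c_k\cos(2\pi k\theta_n-\tau)$ (the paper carries along $b_j,c_j,\gamma_1$ for generality), and you quantify the localization with the explicit scale $\delta_n=p_n^{-\sigma/2}$ where the paper argues more qualitatively; your closing remark that any positive lower bound on $|\lambda_f(p_n)|$ would suffice here is also correct.
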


\begin{proof}
Hereafter, for brevity, we write $p_n=p$, $p_n^{-\sigma}=q$, $2\pi\theta_n=\theta$, 
$z_n(\theta_n)=z(\theta)$, $g_{\tau,n}(\theta_n)=g_{\tau}(\theta)$, 
$x_{n}(\theta_n)=x(\theta)$, and $y_n(\theta_n)=y(\theta)$.   
Since the
Taylor expansion of $A_n(x)$ is given by
$$
A_n(x)=\sum_{j=1}^{\infty}a_j x^j\qquad\mbox{with}\qquad
a_j=\frac{1}{j}(\alpha_f(p)^j+\beta_f(p)^j),
$$
we have
$$
z(\theta)=\sum_{j=1}^{\infty}a_j q^j e^{ij\theta}.
$$
Therefore, putting $b_j=\Re a_j$ and $c_j=\Im a_j$, we have
\begin{align*}
x(\theta)=\sum_{j=1}^{\infty}q^j u_j(\theta),\quad
y(\theta)=\sum_{j=1}^{\infty}q^j v_j(\theta),
\end{align*}
where
$$
u_j(\theta)=b_j\cos(j\theta)-c_j\sin(j\theta), \quad
v_j(\theta)=b_j\sin(j\theta)+c_j\cos(j\theta).
$$
Differentiate these series termwisely with respect to $\theta$; for example
\begin{align*}
x^{\prime}(\theta)=-\sum_{j=1}^{\infty}jq^j v_j(\theta),\quad
y^{\prime}(\theta)=\sum_{j=1}^{\infty}jq^j u_j(\theta)
\end{align*}
and so on.
From \eqref{2-4} we have  $|a_j|\leq 2/j$, so
\begin{align}\label{estimate-b-c}
|b_j|\leq 2/j,\quad |c_j|\leq 2/j.
\end{align}
Noting these estimates and $q<1$, we see that
these differentiated series are 
convergent absolutely.
Therefore $x(\theta)$, $y(\theta)$ are belonging to the $C^{\infty}$-class, 
and so is $g_{\tau}(\theta)$.
In particular the above termwise differentiation is valid, and we have
\begin{align}\label{7-3}
g_{\tau}^{\prime}(\theta)
&=-\sum_{j=1}^{\infty}jq^j v_j(\theta)\cos\tau
+\sum_{j=1}^{\infty}jq^j u_j(\theta)\sin\tau\\
&=-q v_1(\theta)\cos\tau+q u_1(\theta)\sin\tau
+E_1(q;\theta,\tau),\notag
\end{align}
where $E_1(q;\theta,\tau)$ denotes the sum corresponding to $j\geq 2$, and 
\begin{align}\label{7-4}
|E_1(q;\theta,\tau)|&\leq 2\sum_{j\geq 2}jq^j(|b_j|+|c_j|)
\leq 2\sum_{j\geq 2}jq^j\left(\frac{2}{j}+\frac{2}{j}\right)\\
&=8\sum_{j\geq 2}q^j =\frac{8q^2}{1-q}.\notag
\end{align}
Since $q=p_n^{-\sigma}\leq 2^{-1/2}=1/\sqrt{2}$, we find that
$E_1(q;\theta,\tau)=O(q^2)$ as $q\to 0$ (that is, $n\to\infty$), 
where the implied constant is absolute.
Therefore from \eqref{7-3} we have
\begin{align*}
g_{\tau}^{\prime}(\theta)
=-qb_1\sin(\theta-\tau)-qc_1\cos(\theta-\tau)+O(q^2).
\end{align*}
Write $\gamma_1=\arg a_1$.    Then $b_1=|a_1|\cos\gamma_1$, $c_1=|a_1|\sin\gamma_1$,
and so
\begin{align}\label{7-5}
g_{\tau}^{\prime}(\theta)
&=-q|a_1|(\cos\gamma_1\sin(\theta-\tau)+\sin\gamma_1\cos(\theta-\tau))+O(q^2)\\
&=-q(|a_1|\sin(\gamma_1+\theta-\tau)+O(q)).\notag
\end{align}
Similarly, one more differentiation gives
\begin{align}\label{7-6}
g_{\tau}^{\prime\prime}(\theta)
&=-\sum_{j=1}^{\infty}j^2q^j u_j(\theta)\cos\tau
-\sum_{j=1}^{\infty}j^2q^j v_j(\theta)\sin\tau\\
&=-q|a_1|\cos(\gamma_1+\theta-\tau)
+E_2(q;\theta,\tau),\notag
\end{align}
where $E_2(q;\theta,\tau)$, the sum corresponding to $j\geq 2$, satisfies
\begin{align}\label{7-7}
|E_2(q;\theta,\tau)|&\leq 2\sum_{j\geq 2}j^2q^j(|b_j|+|c_j|)
\leq 2\sum_{j\geq 2}j^2q^j\left(\frac{2}{j}+\frac{2}{j}\right)\\
&=8\sum_{j\geq 2}jq^j =\frac{8q^2(2-q)}{(1-q)^2}.\notag
\end{align}
(The proof of the last equality: Put $J=\sum_{j\geq 2}jq^j$, and observe that
$J=\sum_{j\geq 1}(j+1)q^{j+1}=q\sum_{j\geq 1}jq^{j}+\sum_{j\geq 1}q^{j+1}
=q^2+qJ+q^2/(1-q)$.)
Therefore $E_2(q;\theta,\tau)=O(q^2)$ with an absolute implied constant (by using again
$q\leq 1/\sqrt{2}$),
and hence
\begin{align}\label{7-8}
g_{\tau}^{\prime\prime}(\theta)=-q(|a_1|\cos(\gamma_1+\theta-\tau)+O(q)).
\end{align}
Furthermore
\begin{align}\label{7-9}
g_{\tau}^{\prime\prime\prime}(\theta)
=q|a_1|\sin(\gamma_1+\theta-\tau)+E_3(q;\theta,\tau)
\end{align}
with
\begin{align}\label{7-10}
&|E_3(q;\theta,\tau)|\leq 2\sum_{j\geq 2}j^3 q^j (|b_j|+|c_j|)
\leq 8\sum_{j\geq 2}j^2 q^j\\
&\qquad=8q^2\left(\frac{3}{1-q}+\frac{1}{1-q^2}+\frac{2q(2-q)}{(1-q)^3}\right)
=O(q^2)\notag
\end{align}
with an absolute implied constant.
(The evaluation of $\sum_{j\geq 2}j^2q^j$ can be done similarly to the last
equality of \eqref{7-7}.)

Now we assume that $p_n\in\mathbb{P}_f(\varepsilon)$, where $\varepsilon$ is a small
positive number.
Recall $a_1=\alpha_f(p)+\beta_f(p)=\lambda_f(p)$.
Therefore from \eqref{2-6} we have
$|a_1|>\sqrt{2}-\varepsilon$.
On the other hand, the term $O(q)$ can be arbitrarily small when $n$ is 
sufficiently large.    Therefore from \eqref{7-8} we find that, for sufficiently 
large $n$, if $\theta=\theta_0$ is 
a solution of $g_{\tau}^{\prime\prime}(\theta)=0$, then 
$\cos(\gamma_1+\theta_0-\tau)$ is to be close to 0.    That is, writing 
$\theta=\theta_1^c,\theta_2^c$ be two solutions of $\cos(\gamma_1+\theta-\tau)=0$ in
the interval $0\leq\theta<2\pi$, we see that $\theta_0$ is close to $\theta_1^c$ or
$\theta_2^c$.

Now consider $g_{\tau}^{\prime\prime\prime}(\theta)$.    From \eqref{7-9} and
\eqref{7-10} we have
$$
g_{\tau}^{\prime\prime\prime}(\theta)=q(|a_1|\sin(\gamma_1+\theta-\tau)+O(q)).
$$
Since
$$
|\sin(\gamma_1+\theta_1^c-\tau)|=|\sin(\gamma_1+\theta_2^c-\tau)|=1,
$$
we see that $g_{\tau}^{\prime\prime\prime}(\theta)\neq 0$ around
$\theta=\theta_j^c$ ($j=1,2$), if $p_n\in\mathbb{P}_f(\varepsilon)$ and $n$
is sufficiently large.    This implies that $g_{\tau}^{\prime\prime}(\theta)$ is
monotone around $\theta=\theta_j^c$.    Therefore we conclude that there is at most
one solution $\theta=\theta_0$ of $g_{\tau}^{\prime\prime}(\theta)=0$ around
$\theta_j^c$.

Moreover, from \eqref{7-8} we see that $g_{\tau}^{\prime\prime}(\theta)$ is
negative around the value of $\theta$ satisfying $\cos(\gamma_1+\theta-\tau)=1$, 
and is positive around the value of $\theta$ satisfying $\cos(\gamma_1+\theta-\tau)=-1$.
Therefore $g_{\tau}^{\prime\prime}(\theta)$ changes its sign twice in the
interval $0\leq\theta<1$, so that the above solution $\theta_0$ indeed exists both around
$\theta_1^c$ and around $\theta_2^c$.    
We denote those solutions by $\theta_1^{\prime\prime}$ and $\theta_2^{\prime\prime}$,
respectively.    That is, $g_{\tau}^{\prime\prime}(\theta)=0$
has exactly two solutions in the interval $0\leq\theta<2\pi$.
\end{proof}

\begin{remark}\label{rem-g-prime}
By the same reasoning as above, we can show that,
if $p_n\in\mathbb{P}_f(\varepsilon)$ and $n$ is sufficiently large, 
$g_{\tau}^{\prime}(\theta)=0$ also
has exactly two solutions $\theta_1^{\prime}$ and $\theta_2^{\prime}$
in the interval $0\leq\theta<2\pi$. 
In fact, there exists two solutions 
$\theta=\theta_1^s,\theta_2^s$ of $\sin(\gamma_1+\theta-\tau)=0$ in the interval $0\leq\theta<2\pi$, and
$\theta_j^{\prime}$ is close to $\theta_j^s$ ($j=1,2$).
(We can further show that, for any $l\in\mathbb{N}$, there exist exactly two solutions
of $g_{\tau}^{(l)}(\theta)=0$.)
\end{remark}

Now we can prove an analogue of the Jessen-Wintner inequality for
automorphic $L$-functions.
In the rest of this section, we follow the argument in the proof of
\cite[Theorem 12]{JW35}.
We use the notation defined in the proof of Lemma \ref{lemma-curve}
and in Remark \ref{rem-g-prime}.  
The integral \eqref{7-2} can be rewritten as
\begin{align}\label{7-11}
K_n(w)=\frac{1}{2\pi}\int_0^{2\pi} e^{i|w|g_{\tau}(\theta)}d\theta.
\end{align}

\begin{prop}\label{prop-curve}
If $p_n\in\mathbb{P}_f(\varepsilon)$ and $n$ is sufficiently large, we have
\begin{align*}
K_n(w)=O\left(\frac{1}{q^{1/2}|w|^{1/2}}+\frac{1}{q|w|}\right),
\end{align*}
with the implied constant depending only on $\varepsilon$.
\end{prop}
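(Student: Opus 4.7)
The plan is to treat $K_n(w) = \frac{1}{2\pi}\int_0^{2\pi} e^{i|w| g_\tau(\theta)} d\theta$ as an oscillatory integral and estimate it by van der Corput type techniques, leveraging the asymptotics \eqref{7-5} and \eqref{7-8} for $g_\tau'$ and $g_\tau''$. Van der Corput's second-derivative test will supply the $1/\sqrt{q|w|}$ contribution near the stationary points, while integration by parts away from them will supply the $1/(q|w|)$ contribution.

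First I fix a small absolute constant $\delta > 0$ (say $\delta = 1/10$) and form intervals $I_j = [\theta_j' - \delta, \theta_j' + \delta]$ ($j=1,2$) around the two stationary points of $g_\tau$, where $\theta_1', \theta_2'$ are the zeros of $g_\tau'$ provided by Remark \ref{rem-g-prime}. The geometric point is that on the circle $[0, 2\pi)$ the zeros $\theta_j^s$ of $\sin(\gamma_1+\theta-\tau)$ and $\theta_j^c$ of $\cos(\gamma_1+\theta-\tau)$ are interlaced at mutual distance $\pi/2$, so for small $\delta$ and $n$ large enough (so that $\theta_j'$ is near $\theta_j^s$ and $\theta_j''$ is near $\theta_j^c$) each $I_j$ avoids both zeros of $g_\tau''$ and each of the two complementary arcs contains exactly one zero of $g_\tau''$.

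On each $I_j$, combining \eqref{7-8} with the hypothesis $|a_1| = |\lambda_f(p_n)| > \sqrt{2} - \varepsilon$ (from \eqref{2-6}) and the absolute $O(q)$ error term yields $|g_\tau''(\theta)| \geq c\, q$ throughout $I_j$ for large $n$, with $c$ depending only on $\varepsilon$. Van der Corput's second-derivative test then gives
\[
\Bigl|\int_{I_j} e^{i|w|g_\tau(\theta)} d\theta\Bigr| \ll \frac{1}{\sqrt{q|w|}}.
\]
On the complement, \eqref{7-5} similarly gives $|g_\tau'(\theta)| \geq c'\, q$ uniformly, and splitting each of the two complementary arcs at the unique zero of $g_\tau''$ it contains produces subintervals on which $g_\tau'$ is monotone and $g_\tau''$ has constant sign. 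Standard integration by parts
\[
\int_a^b e^{i|w|g_\tau} d\theta = \frac{1}{i|w|}\Bigl[\frac{e^{i|w|g_\tau}}{g_\tau'}\Bigr]_a^b + \frac{1}{i|w|}\int_a^b e^{i|w|g_\tau}\frac{g_\tau''}{(g_\tau')^2}d\theta
\]
then contributes $O(1/(q|w|))$ on each such piece (the second integral is handled via monotonicity of $1/g_\tau'$). Summing all contributions gives the claim.

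The main technical point is uniformity: the lower bounds $|g_\tau''| \gtrsim q$ on $I_j$ and $|g_\tau'| \gtrsim q$ off $I_1 \cup I_2$ must hold uniformly in $\tau$, and the ``$n$ sufficiently large'' threshold must depend only on $\varepsilon$. Both follow by tracking that the implied constants in \eqref{7-4} and \eqref{7-7} are absolute and that $|a_1| > \sqrt{2}-\varepsilon$ bounds $a_1$ away from zero independently of $p_n \in \mathbb{P}_f(\varepsilon)$.
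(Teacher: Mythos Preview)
Your proposal is correct and follows essentially the same approach as the paper: split the circle into regions where $|g_\tau''|\gtrsim q$ (apply the second-derivative test) and regions where $|g_\tau'|\gtrsim q$ (apply the first-derivative test after splitting at the zero of $g_\tau''$ to secure monotonicity). The only cosmetic difference is the choice of cut points: the paper splits at the four values $\theta_{ij}$ where $|\sin(\gamma_1+\theta-\tau)|=|\cos(\gamma_1+\theta-\tau)|=1/\sqrt{2}$, which yields the explicit lower bound $q(1-2\varepsilon)$ on both derivatives and makes the dependence on $\varepsilon$ transparent, whereas you cut at fixed distance $\delta$ from the stationary points---both choices work for the same reason and give the same estimate.
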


\begin{proof}
When $\theta$ moves between $\theta_i^s$ and $\theta_j^c$ ($1\leq i,j\leq 2$)
(mod $2\pi$),
the values of $\sin(\gamma_1+\theta-\tau)$ and $\cos(\gamma_1+\theta-\tau)$
varies continuously and monotonically, and there exists a unique value
$\theta=\theta_{ij}$ between $\theta_i^s$ and $\theta_j^c$ at which
$$|\sin(\gamma_1+\theta_{ij}-\tau)|=|\cos(\gamma_1+\theta_{ij}-\tau)|=1/\sqrt{2}$$ 
holds.

We split the interval $0\leq\theta<1$ (mod $2\pi$) into four subintervals at the values 
$\theta_{ij}$ ($1\leq i,j\leq 2$).
Then on two of those subintervals (which we denote by $I_A$ and $I_B$) the inequality
$|\sin(\gamma_1+\theta-\tau)|\geq 1/\sqrt{2}$ holds, while on the other two
subintervals (which we denote by $I_C$ and $I_D$) 
the inequality $|\cos(\gamma_1+\theta-\tau)|\geq 1/\sqrt{2}$ holds.

Since $p_n\in\mathbb{P}_f(\varepsilon)$ and $n$ is sufficiently large, we can again
use the facts $|a_1|>\sqrt{2}-\varepsilon$ and
the term $O(q)$ is small.
Therefore from \eqref{7-5} we find 
\begin{align}\label{7-12}
|g_{\tau}^{\prime}(\theta)|\geq q((\sqrt{2}-\varepsilon)(1/\sqrt{2})-\varepsilon)
\geq q(1-2\varepsilon)
\end{align}
for $\theta\in I_A\cup I_B$.   
Similarly from \eqref{7-8} we find that, for
sufficiently large $n$,
\begin{align}\label{7-13}
|g_{\tau}^{\prime\prime}(\theta)|\geq q(1-2\varepsilon)
\end{align}
for $\theta\in I_C\cup I_D$.

The number $\theta_1^c$ is included in $I_A$ or $I_B$, say $I_A$.    Then 
$\theta_2^c\in I_B$.
Therefore also $\theta_1^{\prime\prime}\in I_A$ and $\theta_2^{\prime\prime}\in I_B$.
We split $I_A$ into two subintervals at $\theta=\theta_1^{\prime\prime}$.
Then in the both of those subintervals, $g_{\tau}^{\prime}(\theta)$ is monotone.
Therefore, applying the first derivative test (Titchmarsh \cite[Lemma 4.2]{Tit51})
with \eqref{7-12} to those subintervals we have
$$
\left|\int_{I_A}e^{i|w|g_{\tau}(\theta)}d\theta\right|\leq 
2\cdot\frac{4}{\min \{|w||g_{\tau}^{\prime}(\theta)|\}}
\leq \frac{8}{q|w|(1-2\varepsilon)},
$$
and the same inequality holds for the integral on $I_B$.

As for the integrals on the intervals $I_C$ and $I_D$, we use the second derivative test
(\cite[Lemma 4.4]{Tit51}).    The monotonicity is not required for the second derivative
test, so we need not divide $I_C$ into subintervals.
Using \eqref{7-13}, we have
$$
\left|\int_{I_C}e^{i|w|g_{\tau}(\theta)}d\theta\right|\leq 
\frac{8}{\sqrt{|w|q(1-2\varepsilon)}},
$$
and the same for $I_D$.    Collecting these inequalities, we obtain the assertion
of the proposition.
\end{proof}

Proposition \ref{prop-curve} implies that
\begin{align}\label{7-14} 
K_n(w)=O_{n,\varepsilon}(|w|^{-1/2})\qquad (|w|\to\infty)
\end{align}
if $p_n\in\mathbb{P}_f(\varepsilon)$ and $n$ is sufficiently large.
The set $\mathbb{P}_f(\varepsilon)$ is of positive density, especially it
includes infinitely many elements (so surely includes five elements).    
Therefore we can obviously apply
Proposition \ref{at-least-5-n} to $\varphi(s)=L_f(s)$, and the proof of
Theorem \ref{main-1} is now complete.

\section{The convexity}\label{sec8}
In our proof of Theorem \ref{main-1}, the convexity of relevant curves plays no
role.    However the geometric property of the curve $\Gamma_n$ is of independent
interest.    We conclude this paper with the following

\begin{prop}\label{convexity}
If $p_n\in\mathbb{P}_f(\varepsilon)$ for a small positive number $\varepsilon$ 
and $n$ is sufficiently large, the curve
$\Gamma_n$ is a closed convex curve.
\end{prop}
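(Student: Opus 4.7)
The plan is to establish convexity of $\Gamma_n$ via the classical criterion that a smooth regular closed curve $z_n(\theta)$ is a convex simple closed curve if and only if its tangent direction $\arg z_n'(\theta)$ is monotone in $\theta$ with total change exactly $2\pi$ over one period. Equivalently, I would show that
\[
\frac{d}{d\theta}\arg z_n'(\theta)=\Im\!\left(\frac{z_n''(\theta)}{z_n'(\theta)}\right)
\]
is strictly positive throughout $[0,2\pi]$, and that its integral over $[0,2\pi]$ equals $2\pi$.

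First, I would reuse the Taylor expansion $z_n(\theta)=\sum_{j\ge 1}a_jq^je^{ij\theta}$ with $a_1=\alpha_f(p_n)+\beta_f(p_n)=\lambda_f(p_n)$ from the proof of Lemma \ref{lemma-curve}, and differentiate termwise to obtain
\[
z_n'(\theta)=ia_1qe^{i\theta}+O(q^2),\qquad z_n''(\theta)=-a_1qe^{i\theta}+O(q^2),
\]
uniformly in $\theta$ with absolute implied constants. The error bounds follow directly from $|a_j|\le 2/j$ and the geometric-series estimates already used in \eqref{7-4} and \eqref{7-7}.

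Next, using the hypothesis $p_n\in\mathbb{P}_f(\varepsilon)$ so that $|a_1|=|\lambda_f(p_n)|>\sqrt{2}-\varepsilon$, dividing these two expansions gives
\[
\frac{z_n''(\theta)}{z_n'(\theta)}=i\bigl(1+O(q)\bigr)
\]
uniformly in $\theta$, where the implied constant depends only on $\varepsilon$. Taking imaginary parts yields $\Im(z_n''/z_n')=1+O(q)$, which is strictly positive for $n$ sufficiently large (so that $q=p_n^{-\sigma}$ is small enough relative to $\varepsilon$). Hence $\arg z_n'(\theta)$ is strictly increasing in $\theta$; since $\Gamma_n$ is closed, the total change of $\arg z_n'$ over one period must be a nonzero integer multiple of $2\pi$, and the derivative estimate $1+O(q)$ forces the multiple to be $+1$. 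This is the defining property of a convex simple closed curve. Simplicity can also be read off directly from $z_n(\theta_1)-z_n(\theta_2)=a_1q(e^{i\theta_1}-e^{i\theta_2})+O(q^2)$ for large $n$, which makes $\theta\mapsto z_n(\theta)$ injective on $[0,2\pi)$.

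The only point requiring care is making the $O(q)$ estimate in $z_n''(\theta)/z_n'(\theta)$ uniform in $\theta$ with a constant depending only on $\varepsilon$, but this is routine given the bounds $|a_j|\le 2/j$ together with the lower bound $|a_1|>\sqrt{2}-\varepsilon$, and essentially repeats computations already performed in the proof of Lemma \ref{lemma-curve}.
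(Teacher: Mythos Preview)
Your argument is correct and, at bottom, uses the same geometric idea as the paper: the tangent direction of $\Gamma_n$ rotates monotonically.  The paper carries this out in real coordinates by showing that the slope $\Xi(\theta)=y'(\theta)/x'(\theta)$ is increasing on the set $\Theta(v_1,n)$ where $|v_1(\theta)|>1-\varepsilon$ (so that $x'(\theta)\neq 0$), and symmetrically that $-\Xi^*(\theta)=-x'(\theta)/y'(\theta)$ is increasing on $\Theta(u_1,n)$, obtaining $\Xi'(\theta)=1+u_1(\theta)^2/v_1(\theta)^2+O(q)$ and its analogue.  Your complex formulation $\Im(z_n''/z_n')=1+O(q)$ computes exactly the same quantity, namely $(x'y''-y'x'')/(x'^2+y'^2)=\frac{d}{d\theta}\arg z_n'(\theta)$, but packages it without any case distinction; the lower bound $|a_1|>\sqrt{2}-\varepsilon$ replaces the paper's splitting into $\Theta(u_1,n)\cup\Theta(v_1,n)$.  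So the two proofs are equivalent in substance, yours being the more economical.

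One small point of precision: your injectivity claim from $z_n(\theta_1)-z_n(\theta_2)=a_1q(e^{i\theta_1}-e^{i\theta_2})+O(q^2)$ does not quite follow as written, since the main term can itself be of size $O(q|\theta_1-\theta_2|)$.  You should instead use $|e^{ij\theta_1}-e^{ij\theta_2}|\le j|e^{i\theta_1}-e^{i\theta_2}|$ to bound the tail by $O(q^2)|e^{i\theta_1}-e^{i\theta_2}|$, giving $|z_n(\theta_1)-z_n(\theta_2)|\ge q(|a_1|-Cq)|e^{i\theta_1}-e^{i\theta_2}|>0$.  Alternatively, simplicity already follows from the fact that a regular closed curve with strictly positive signed curvature and turning number $+1$ is automatically simple, so this step is in fact redundant.
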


\begin{remark}\label{rem-convexity}
Using \cite[Theorem 13]{JW35} we have that each curve $\Gamma_n$ is convex if
$|\xi|$ is sufficiently small.    But their theorem does not give any explicit bound
of $|\xi|$ (which may depend on $n$), so we cannot deduce the above proposition from 
their theorem.
\end{remark}

\begin{proof}[Proof of Proposition \ref{convexity}]
Assume $p_n\in\mathbb{P}_f(\varepsilon)$ and $n$ is large.    Then
$$
u_1(\theta)^2+v_1(\theta)^2=b_1^2+c_1^2=|a_1|^2
=|\alpha_f(p)+\beta_f(p)|^2>(\sqrt{2}-\varepsilon)^2
$$
by \eqref{2-6}.
Therefore at least one of $|u_1(\theta)|^2$ and $|v_1(\theta)|^2$ is larger than
$(\sqrt{2}-\varepsilon)^2/2$, that is, at least one of 
$|u_1(\theta)|$ and $|v_1(\theta)|$ is larger than
$(\sqrt{2}-\varepsilon)/\sqrt{2}>1-\varepsilon$.
Let
\begin{align*}
\Theta(u_1,n)&=\{\theta\in[0,2\pi)\;|\;|u_1(\theta)|>1-\varepsilon\},\\
\Theta(v_1,n)&=\{\theta\in[0,2\pi)\;|\;|v_1(\theta)|>1-\varepsilon\}.
\end{align*}
Then $\Theta(u_1,n)\cup\Theta(v_1,n)=[0,2\pi)$.

First consider the case when $\theta\in\Theta(v_1,n)$. 
The curve $\Gamma_n$ consists of the points
$z(\theta)=x(\theta)+iy(\theta)$.
We identify $\mathbb{C}$ with the $\mathbb{R}^2$-space
$\{(x,y)\;|\;x,y\in\mathbb{R}\}$, and identify $z(\theta)$ with $(x(\theta),y(\theta))$.
We study the behavior of the tangent line of the planar curve $\Gamma_n$ at 
$z(\theta)$, when
$\theta$ varies.
By $\Xi(\theta)$ we denote the tangent of the angle of inclination of 
the tangent line at $z(\theta)$.
Then
\begin{align}\label{8-1}
\Xi(\theta)=\frac{y^{\prime}(\theta)}{x^{\prime}(\theta)}
=-\left(\sum_{j=1}^{\infty}jq^j u_j(\theta)\right)\left/
\left(\sum_{j=1}^{\infty}jq^j v_j(\theta)\right)\right..
\end{align}
It is to be noted that the denominator is $qv_1(\theta)+O(q^2)$, so this is non-zero 
for sufficiently small $q$ (that is, sufficiently large $n$), because now we assume
$\theta\in\Theta(v_1,n)$.

We evaluate $\Xi^{\prime}(\theta)$.    First, by differentiation we have
\begin{align}\label{8-2}
\Xi^{\prime}(\theta)=X_1(\theta)+X_2(\theta)+X_3(\theta)+X_4(\theta),
\end{align}
say, where
\begin{align*}
X_1(\theta)&=q v_1(\theta)\left/\left(\sum_{j=1}^{\infty}jq^j v_j(\theta)\right)\right.,\\
X_2(\theta)&=\left(\sum_{j=2}^{\infty} j^2q^j v_j(\theta)\right)\left/
\left(\sum_{j=1}^{\infty}jq^j v_j(\theta)\right)\right.,\\
X_3(\theta)&=(q u_1(\theta))^2\left/\left(\sum_{j=1}^{\infty}jq^j v_j(\theta)\right)^2
\right.,
\end{align*}
and
\begin{align*}
X_4(\theta)=\left(\sum_{\substack{j,k\in\mathbb{N} \\ j+k\geq 3}}jk^2 q^{j+k}u_j(\theta)
u_k(\theta)\right)\left/\left(\sum_{j=1}^{\infty}jq^j v_j(\theta)\right)^2\right..
\end{align*}
We write
\begin{align}\label{8-3}
\sum_{j=1}^{\infty}jq^j v_j(\theta)=qv_1(\theta)(1+Y(\theta)),
\end{align}
where
$$
Y(\theta)=\sum_{j=2}^{\infty}jq^{j-1}\frac{v_j(\theta)}{v_1(\theta)}.
$$
Since $|v_1(\theta)|>1-\varepsilon$, 
using \eqref{estimate-b-c} we have
$$
|Y(\theta)|\leq \frac{4}{1-\varepsilon}\sum_{j=2}^{\infty}q^{j-1}
=\frac{4q}{(1-\varepsilon)(1-q)}=O(q)
$$
(noting $q$ is small).
Therefore
\begin{align}\label{8-4}
&\left(\sum_{j=1}^{\infty}jq^j v_j(\theta)\right)^{-1}
=\frac{1}{q v_1(\theta)}\left(1-\frac{Y(\theta)}{1+Y(\theta)}\right)\\
&\quad=\frac{1}{q v_1(\theta)}+O\left(\frac{1}{q(1-\varepsilon)}\frac{|Y(\theta)|}
{1-|Y(\theta)|}\right)=\frac{1}{q v_1(\theta)}+O(1).\notag
\end{align}
This implies
\begin{align}\label{8-5}
X_1(\theta)=1+O(q).
\end{align}
The numerator of $X_2(\theta)$ can be evaluated, as in \eqref{7-7}, by $O(q^2)$.
Therefore with \eqref{8-4} (whose right-hand side is $O(q^{-1})$) we have
\begin{align}\label{8-6}
X_2(\theta)=O(q^2\cdot q^{-1})=O(q).
\end{align}
As for $X_3(\theta)$, again using $|v_1(\theta)|>1-\varepsilon$ and \eqref{8-4} 
we obtain
\begin{align}\label{8-7}
X_3(\theta)=\frac{u_1(\theta)^2}{v_1(\theta)^2}\left(1-
\frac{Y(\theta)}{1+Y(\theta)}\right)^2
=\frac{u_1(\theta)^2}{v_1(\theta)^2}+O(q).
\end{align}
Lastly, we have
\begin{align}\label{8-8}
X_4(\theta)\ll \sum_{\substack{j,k\in\mathbb{N} \\ j+k\geq 3}}kq^{j+k}\cdot q^{-2}
\ll q,
\end{align}
because
\begin{align*}
\sum_{\substack{j,k\in\mathbb{N} \\ j+k\geq 3}}kq^{j+k}
&=\sum_{j\geq 1}q^j \sum_{k\geq\max\{1,3-j\}}kq^k
=q\sum_{k\geq 2}kq^k+\sum_{j\geq 2}q^j \sum_{k\geq 1}kq^k\\
&=qJ+(q+J)\sum_{j\geq 2}q^j=O(q^3)
\end{align*}
(where $J$ was defined just after \eqref{7-7}).
Collecting \eqref{8-2}, \eqref{8-5}, \eqref{8-6}, \eqref{8-7} and \eqref{8-8}, 
we obtain
\begin{align}\label{8-9}
\Xi^{\prime}(\theta)=1+\frac{u_1(\theta)^2}{v_1(\theta)^2}+O(q).
\end{align}
Note that all the implied constants in the above formulas are absolute.
When $n$ is large, $O(q)$ becomes small, so \eqref{8-9} implies that
$\Xi^{\prime}(\theta)>0$.    That is, if $p_n\in\mathbb{P}_f(\varepsilon)$, $n$ is
sufficiently large, and $\theta\in\Theta(v_1,n)$, then $\Xi(\theta)$ is
monotonically increasing.

In the case when $\theta\in\Theta(u_1,n)$, 
we change the roles of the axes.    That is, now we identify $z(\theta)\in\mathbb{C}$ with
$(-y(\theta),x(\theta))\in\mathbb{R}^2$.
Instead of $\Xi(\theta)$, we consider
$\Xi^*(\theta)=x^{\prime}(\theta)/y^{\prime}(\theta)$.
(The denominator $y^{\prime}(\theta)$ is non-zero for large $n$
because $\theta\in\Theta(u_1,n)$.)
Then $-\Xi^*(\theta)$ is the tangent of the angle of inclination of the tangent line, 
under this new choice of the axes.
We can proceed similarly, and obtain, analogously to \eqref{8-9},
\begin{align}\label{8-10}
(-\Xi^*(\theta))^{\prime}=1+ \frac{v_1(\theta)^2}{u_1(\theta)^2}+O(q),
\end{align}
hence $-\Xi^*(\theta)$ is monotonically increasing when $\theta\in\Theta(u_1,n)$.
Therefore the tangent of the angle of inclination is always increasing, which
implies that the curve $\Gamma_n$ is convex.
\end{proof}

\bigskip

\end{document}